\documentclass[a4paper]{amsart}
\usepackage{enumerate, amsfonts, amssymb, amsthm, mathrsfs, booktabs}
\usepackage{tabularx}

\usepackage{caption}
\usepackage[arrow, matrix, curve]{xy}
\usepackage{tikz}
\usepackage{graphicx}
\usetikzlibrary{patterns,shapes,decorations.pathmorphing,decorations.pathreplacing,calc,arrows}
\usepackage{blindtext, rotating}
\usepackage{pgfplots}
\usepackage{comment}
\pgfplotsset{compat=newest}
\usepackage[para]{footmisc}

\usepackage[colorlinks=true]{hyperref}
\numberwithin{equation}{section}
\usepackage{cleveref}

\newcommand{\FindStat}[1]{\url{www.findstat.org/#1}}
\newcommand{\OEIS}[1]{\url{www.oeis.org/#1}}

\numberwithin{equation}{section}
\theoremstyle{plain}
\newtheorem{lemma}[equation]{Lemma}
\newtheorem{theorem}[equation]{Theorem}
\newtheorem{conjecture}[equation]{Conjecture}
\newtheorem{corollary}[equation]{Corollary}
\newtheorem{proposition}[equation]{Proposition}
\newtheorem{problem}{Problem}

\theoremstyle{definition}
\newtheorem{definition}[equation]{Definition}
\newtheorem{remark}[equation]{Remark}

\newtheorem{example}[equation]{Example}

\crefname{proposition}{Prop.}{Props.}
\crefname{corollary}{Cor.}{Cors.}
\crefname{theorem}{Thm.}{Thms.}
\crefname{equation}{Eq.}{Eqs.}

\Crefname{proposition}{Proposition}{Propositions}
\Crefname{theorem}{Theorem}{Theorems}
\Crefname{corollary}{Corollary}{Corollaries}
\Crefname{equation}{Equation}{Equations}

%
%

%
\DeclareMathOperator{\Ext}{Ext}%
\DeclareMathOperator{\id}{id}%
\DeclareMathOperator{\gldim}{gldim}%

\definecolor{lightgrey}{rgb}{0.7,0.7,0.7}

\makeatletter
\long\def\ifnodedefined#1#2#3{%
	\@ifundefined{pgf@sh@ns@#1}{#3}{#2}%
}
\makeatother

\newcommand{\drawPath}[4]
{%
	\draw[rounded corners=1, line width=2, #3] #4
	\foreach \dir in {#1}%
	{
		\ifnum\dir=#2
		-- ++(1,0)
		\else
		-- ++(0,1)
		\fi
	};}

\title[Homological algebra of Nakayama algebras]{Homological algebra of Nakayama algebras and 321-avoiding permutations}

\date{\today}
\author{Eirini Chavli}
\address[E.~Chavli]{Institute for Discrete Structures and Symbolic Computation of the University of Stuttgart, Germany}
\email{eirini.chavli@mathematik.uni-stuttgart.de}
\author{Rene Marczinzik}
\address[R.~Marczinzik]{Mathematical Institute of the University of Bonn, Germany}
\email{marczinzik.rene@googlemail.com}

\hyphenation{Naka-yama LNaka-yama CNaka-yama Aus-lan-der Ku-pisch coKu-pisch semi-length}
\subjclass[2010]{Primary 16G10, 18G20}

\keywords{Nakayama algebras, Dyck paths, 321-avoiding permutations, Ext}
\begin{document}
	\maketitle
	
	\begin{abstract}
		Linear Nakayama algebras over a field $K$ are in natural bijection to Dyck paths and Dyck paths are in natural bijection to 321-avoiding permutations via the Billey-Jockusch-Stanley bijection. Thus to every 321-avoiding permutation $\pi$  we can associate in a natural way a linear Nakayama algebra $A_{\pi}$.
		We give a homological interpretation of the fixed points statistic of 321-avoiding permutations using Nakayama algebras with a linear quiver. We furthermore show that the space of self-extensions for the Jacobson radical of a linear Nakayama algebra $A_{\pi}$ is isomorphic to $K^{\mathfrak{s}(\pi)}$, where $\mathfrak{s}(\pi)$ is defined as the cardinality $k$ such that $\pi$ is the minimal product of transpositions of the form $s_i=(i,i+1)$ and $k$ is the number of distinct $s_i$ that appear. 
	\end{abstract}

	\section{Introduction}
	We assume all algebras are finite dimensional over a field $K$ and are given by a connected quiver and admissible relations.
	Nakayama algebras with a linear quiver and $n$ simple modules are in natural bijection to Dyck paths and Dyck paths are in natural bijection to 321-avoiding permutations via the Billey-Jockusch-Stanley bijection. Thus to every 321-avoiding permutation $\pi$ we can associate a Nakayama algebra with a linear quiver that we denote by $A_{\pi}$.
	In \cite{IM}, it was shown that for the incidence algebra of a finite distributive lattice $L$, the number of indecomposable projective $A$-modules with injective dimension one is equal to the number of join-irreducible elements of $L$ (we refer to Corollary \ref{injdimprojectives} for an explicit description of indecomposable projective modules with injective dimension one). Thus it is a natural question whether this number of indecomposable projective $A$-modules with injective dimension one also has a combinatorial interpretation for other finite dimensional algebras.
	An important statistic for 321-avoiding permutations is the number of fixed points, see for example \cite{HRS}.
	Our first main result gives a homological interpretation of fixed points of 321-avoiding permutations using Nakayama algebras and the number of indecomposable projective $A$-modules with injective dimension one.
	\begin{theorem}
		Let $A_{\pi}$ be a Nakayama algebra corresponding to the 321-avoiding permutation $\pi$.
		Then the number of indecomposable projective $A$-modules with injective dimension one is equal to the number of fixed points of $\pi$.
	\end{theorem}
	
	As a part of the proof of the above theorem, we provide a formula for the fixed points of every 321-avoiding permutation (see Corollary \ref{corollaryP}). Moreover, this proof also specifies which indecomposable projectives have injective dimension 1 (see proof of Proposition \ref{P}).

	A classical topic in homological algebra is the calculation of extension spaces between modules of a ring. One of the most important modules for finite dimensional algebras is the Jacobson radical $J$ that is defined as the intersection of all maximal right ideals. A central result is that the algebra is semi-simple if and only if the Jacobson radical is zero. 
	In this article we want to look at the vector space of extensions $\Ext_A^1(J,J)$ that classifies short exact sequences of the form $0 \rightarrow J \rightarrow W \rightarrow J \rightarrow 0$.
	Viewing the symmetric group $S_n$ as a Coxeter group with standard generators the transpositions $s_i=(i,i+1)$, the support size $\mathfrak{s}(\pi)$ of a permutation $\pi$ is defined as the cardinality $k$ such that $\pi$ is the minimal product of transpositions of the form $s_i$ and $k$ is the number of distinct $s_i$ that appear (see \url{http://www.findstat.org/StatisticsDatabase/St000019} for this statistic on permutations).  
	Our second main result relates the space of self-extensions of the Jacobson radical and the support size of a 321-avoiding permutation.
	\begin{theorem} \label{conjecture}
		Let $A_{\pi}$ be a linear Nakayama algebra with Jacobson radical $J$ associated to the 321-avoiding permutation $\pi$.
		Then $\Ext_{A_{\pi}}^1(J,J) \cong K^{\mathfrak{s}(\pi)}$.
	\end{theorem}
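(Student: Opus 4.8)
The plan is to compute $\Ext^{1}_{A_\pi}(J,J)$ explicitly from the Kupisch series of $A_\pi$ and then to match the answer with $\mathfrak{s}(\pi)$ through the Billey--Jockusch--Stanley bijection. Write $A=A_\pi$, let $1,\dots ,N$ be the vertices of the linear quiver (so $\pi\in\fS_{N-1}$ and $D_\pi$ has semilength $N-1$), let $(c_1,\dots ,c_N)$ be the Kupisch series, and put $\ell(k):=k+c_k-1$, the index of the socle of the indecomposable projective $P_k$. The Kupisch inequalities $c_k\le c_{k+1}+1$ say exactly that $\ell(1)\le\ell(2)\le\dots\le\ell(N)=N$ is weakly increasing, a fact used throughout. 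Since $J=\rad A=\bigoplus_{k=1}^{N}\rad P_k$ as a right module, $\Ext^{1}_A(J,J)\cong\bigoplus_{i,j}\Ext^{1}_A(\rad P_i,\rad P_j)$, and everything reduces to the individual summands.

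The homological core is the computation of $\Ext^{1}_A(\rad P_i,\rad P_j)$ from the start $\Omega^{1}(\rad P_i)\hookrightarrow P_{i+1}\twoheadrightarrow\rad P_i$ of the minimal projective resolution, together with one further step. Because the quiver is linear, each $\rad P_j$ is an interval module with pairwise non-isomorphic composition factors, so every $\Hom_A(P_k,\rad P_j)$ is at most one-dimensional and hence so is $\Ext^{1}_A(\rad P_i,\rad P_j)$; its nonvanishing is detected by a ``gap'' in the complex $\Hom_A(P_{\bullet},\rad P_j)$. Reading off tops and socles of $\Omega^{0}(\rad P_i)=\rad P_i$, $\Omega^{1}(\rad P_i)=\rad^{\,c_i-1}P_{i+1}$ and $\Omega^{2}(\rad P_i)$ and using monotonicity of $\ell$ to collapse the condition coming from $\Omega^{2}$, I expect the criterion
\[
\Ext^{1}_A(\rad P_i,\rad P_j)\neq 0\iff i+1\le j\le \ell(i)<\ell(i+1)=\ell(j),
\]
i.e. $S_j$ is a composition factor of $\rad P_i$, the module $\rad P_i$ is non-projective, and $P_j$ and $P_{i+1}$ have the same socle. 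Summing over $(i,j)$ is bookkeeping: if the level sets of $\ell$ are $L_1=[1,b_1],\dots ,L_m=[b_{m-1}+1,N]$ with constant values $v_1<\dots <v_m=N$ (so $b_1,\dots ,b_{m-1}$ are the strict ascents of $\ell$ and $v_t-b_t=c_{b_t}-1$), the criterion yields
\[
\dim_K\Ext^{1}_A(J,J)=\sum_{t=1}^{m-1}\bigl(\min(b_{t+1},v_t)-b_t\bigr)=\sum_{t=1}^{m-1}\min\bigl(|L_{t+1}|,\,c_{b_t}-1\bigr).
\]
The only subtlety here is the boundary case $\Omega^{2}(\rad P_i)=0$, where one checks separately (again from $\ell$ being weakly increasing) that the displayed criterion is still correct.

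The last and hardest step is to recognise this count as $\mathfrak{s}(\pi)$. On the permutation side $s_k$ lies in the support of $\pi$ exactly when $\pi$ fails to stabilise $\{1,\dots ,k\}$, so $\mathfrak{s}(\pi)=(N-2)-\#\{k\in[1,N-2]:\pi(\{1,\dots ,k\})=\{1,\dots ,k\}\}$, and a routine property of the Billey--Jockusch--Stanley bijection identifies the set counted here with the interior returns of $D_\pi$ to the $x$-axis; hence $\mathfrak{s}(\pi)=(N-1)-r$ where $r$ is the number of prime components of $D_\pi$. It remains to prove
\[
\sum_{t=1}^{m-1}\bigl(\min(b_{t+1},v_t)-b_t\bigr)=(N-1)-r
\]
for the linear Nakayama algebra attached to $D_\pi$. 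I would do this by induction on $r$, using that the Dyck-path/Nakayama correspondence is compatible with concatenation of paths (equivalently $D_\pi=D'D''\iff\pi=\pi'\oplus\pi''$): gluing the algebras of $D'$ and $D''$ along one vertex, one tracks how the level sets $L_t$ and the local contributions $\min(b_{t+1},v_t)-b_t$ transform, which reduces the identity to the case $r=1$ of a prime Dyck path, where the assertion is that $\dim_K\Ext^{1}_A(J,J)$ attains its maximal value $N-2$; the prime case is then a direct check. The genuine obstacle is precisely this bijective bookkeeping: one must use the exact normalisations of both bijections, since only for the right normalisation do interior returns of $D_\pi$, initial segments stabilised by $\pi$, and the gluing pattern of Kupisch series all line up.
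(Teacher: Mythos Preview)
Your computation of $\dim_K\Ext^1_A(J,J)$ via the double decomposition $\bigoplus_{i,j}\Ext^1_A(\rad P_i,\rad P_j)$ and the level-set formula is a reasonable route, and it is different from the paper's: there one fixes only the second argument, observes that $\dim\Ext^1_A(J,e_iJ)\in\{0,1\}$ with the value governed by whether $\id(e_iJ)\le 1$, and then counts the radicals $e_iJ$ of injective dimension at most one by passing (via $\Omega^{-1}$) to indecomposable injectives whose first syzygy is a radical.

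The genuine gap is in your final step. The assertion that under the Billey--Jockusch--Stanley bijection the set $\{k\in[1,N-2]:\pi(\{1,\dots,k\})=\{1,\dots,k\}\}$ corresponds to the interior returns of $D_\pi$ to the $x$-axis is false, and therefore the identity $\mathfrak{s}(\pi)=(N-1)-r$ you aim to prove is false as well. Under the conventions of the paper the identity in $S_n$ corresponds to $u^nd^n$ (one prime component, no interior returns) while its connectivity set is all of $[1,n-1]$; conversely the long cycle $(1\,2\,\cdots\,n)$ corresponds to $(ud)^n$ ($n$ prime components, $n-1$ interior returns) while its connectivity set is empty. Even more decisively, in $S_3$ the permutations $[3,1,2]$ and $[2,3,1]$ both have support size~$2$, yet their Dyck paths $uududd$ and $ududud$ have $1$ and $3$ prime components respectively. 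So $\mathfrak{s}(\pi)$ is not a function of the number of prime components of $D_\pi$, no choice of normalisation can rescue this, and your induction on $r$ cannot close the argument.

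What is actually needed---and what the paper does---is to compute the cardinality of the connectivity set directly from the excedance data of $\pi$ (this is the content of Theorem~\ref{theoremPP}) and to match that formula term by term with the module count coming from the Dyck path (Corollary~\ref{corollaryPP}). If you want to keep your level-set formula $\sum_{t=1}^{m-1}(\min(b_{t+1},v_t)-b_t)$, the correct target to compare it against is that excedance-based expression for $\mathfrak{s}(\pi)$, not the number of prime components.
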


	As a corollary of the previous theorem we obtain that the number of linear Nakayama algebras $A$ with Jacobson radical $J$ having $n+2$ simple modules such that \linebreak $\dim(\Ext_A^1(J,J))=k$ is equal to the number of standard tableaux of shape $[n,k]$. In particular the maximal vector space dimension of $\Ext_A^1(J,J)$ is equal to $n$ for linear Nakayama algebras with $n+2$ simple modules and the number of algebras where this maximal vector space dimension is attained is given by the Catalan number $C_n$.

	\section{Preliminaries} 
	We always assume that algebras are finite dimensional, connected quiver algebras over a field $K$.
	A module $M$ is called \emph{uniserial} if it has a unique composition series. A \emph{Nakayama algebra} is by definition an algebra such that every indecomposable module is uniserial.
	They can be characterized as the quiver algebras having either a linear oriented line as a quiver or a linear oriented cycle.
	
	The quiver of a Nakayama algebra with a cycle as a quiver:
	$$\begin{xymatrix}{ &  \circ^0 \ar[r] & \circ^1 \ar[dr] &   \\
			\circ^{n-1} \ar[ur] &     &     & \circ^2 \ar[d] \\
			\circ^{n-2} \ar[u] &  &  & \circ^3 \ar[dl] \\
			& \circ^5 \ar @{} [ul] |{\ddots} & \circ^4 \ar[l] &  }\end{xymatrix}$$
	\newline
	\newline
	The quiver of a Nakayama algebra with a line as a quiver:
	$$\begin{xymatrix}{ \circ^0 \ar[r] & \circ^1 \ar[r] & \circ^2 \ar @{} [r] |{\cdots} & \circ^{n-2} \ar[r] & \circ^{n-1}}\end{xymatrix}$$
	
	In this article we will only work with Nakayama algebras having a line as a quiver. Thus Nakayama algebra means in the rest of the article a Nakayama algebra that is a quiver algebra with a connected line as a quiver.
	Let $e_i$ be the primitive idempotents corresponding to the points in the quiver of a Nakayama algebra $A$.
	A Nakayama algebra is uniquely determined by the dimensions of the indecomposable projective modules $e_i A$. Set $c_i := \dim( e_i A)$, then the list $[c_0,c_1,\dots,c_{n-1}]$ is called the \emph{Kupisch series} of $A$ when $A$ has $n$ simple modules.
	Every indecomposable module of a Nakayama algebra $A$ is of the form $e_i A/e_i J^k$ for some $i=0,1,\dots,n-1$ and $1 \leq k \leq c_i$.
	The indecomposable injective $A$-modules are the modules $D(Ae_i)$ and we denote their vector space dimensions by $d_i$.
	We have that $d_{i}=\min\big\{k \mid k \geq c_{i-k} \big\}$, see Theorem 2.2 in \cite{Ful}.
	Nakayama algebras with a linear quiver are in a natural bijection to Dyck paths by associating to a Nakayama algebra $A$ the Dyck path given as the top boundary of the Auslander-Reiten quiver of $A$, see the preliminaries in \cite{MRS} for full details.
	
	On the other hand, Dyck paths are in natural bijection to 321-avoiding permutations via the Billey-Jockusch-Stanley bijection that was introduced in \cite{BJS}. We will explain the Billey-Jockusch-Stanley bijection in the last section.
	Using those two bijections, we see that we can associate to every 321-avoiding permutation $\pi$ in a bijective way a Nakayama algebra $A_{\pi}$.
	\section{Translation} 
	In the following we give elementary translations of the homological notions in Theorem \ref{conjecture}.
	\begin{lemma}
		\label{extformula}
		Let~$A$ be a finite dimensional algebra with a simple $A$-module $S$.
		\begin{enumerate}  
			\item  Let $M$ be an $A$-module with minimal projective resolution
			$$\cdots P_i \rightarrow \cdots P_1 \rightarrow P_0 \rightarrow M \rightarrow 0.$$
			For $l \geq 0$, $\Ext_A^l(M,S) \neq 0$ if and only if there is a surjection $P_l \rightarrow S$.
			\item   Dually, let
			$$0 \rightarrow M \rightarrow I_0 \rightarrow I_1 \rightarrow \cdots \rightarrow I_i \rightarrow \cdots $$
			be a minimal injective coresolution of~$M$.
			For $l \geq 0$, $\Ext_A^l(S,M) \neq 0$ if and only if there is an injection $S \rightarrow I_l$.
		\end{enumerate}
	\end{lemma}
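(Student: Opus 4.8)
The plan is to compute both $\Ext$ groups directly from the given minimal (co)resolutions, exploiting the fact that the differentials in a minimal resolution are small together with the simplicity of $S$.

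First, for part (1), I would recall that $\Ext_A^l(M,S)$ is the $l$-th cohomology of the cochain complex $\Hom_A(P_\bullet, S)$ obtained by deleting $M$ from the resolution and applying $\Hom_A(-,S)$. The crucial input is that minimality of the projective resolution is equivalent to each differential $d_i\colon P_i\to P_{i-1}$ having image contained in $\rad(P_{i-1})$. Since $S$ is simple, $\rad(S)=0$, so any homomorphism $f\colon P_{i-1}\to S$ annihilates $\rad(P_{i-1})$, whence $f\circ d_i=0$. Thus every differential of $\Hom_A(P_\bullet,S)$ is zero and $\Ext_A^l(M,S)\cong \Hom_A(P_l,S)$. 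Finally, a nonzero homomorphism from $P_l$ to the simple module $S$ is automatically surjective, so $\Hom_A(P_l,S)\neq 0$ if and only if there is a surjection $P_l\to S$.

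For part (2) I would argue dually: $\Ext_A^l(S,M)$ is the $l$-th cohomology of $\Hom_A(S,I_\bullet)$. Here minimality of the injective coresolution is equivalent to $\ker(d^l)$ being an essential submodule of $I_l$ for every $l\ge 0$ (for $l=0$ this is just $M\hookrightarrow I_0=E(M)$). An essential submodule contains every simple submodule, so the image of any nonzero $g\colon S\to I_l$ is forced to lie inside $\ker(d^l)$, giving $d^l\circ g=0$; hence all differentials of $\Hom_A(S,I_\bullet)$ vanish and $\Ext_A^l(S,M)\cong\Hom_A(S,I_l)$. Since $S$ is simple, this is nonzero exactly when there is a nonzero — equivalently injective — map $S\to I_l$.

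The underlying computations are entirely standard, so I do not anticipate a serious obstacle; the only point I would be careful to formulate precisely is the pair of characterisations of minimality being invoked — radical differentials on the projective side, essential kernels on the injective side — after which both statements follow formally from the simplicity of $S$.
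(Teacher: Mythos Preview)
Your argument is correct and is in fact the standard one: minimality forces all differentials in $\Hom_A(P_\bullet,S)$ (respectively $\Hom_A(S,I_\bullet)$) to vanish, whence $\Ext_A^l$ reduces to $\Hom_A(P_l,S)$ (respectively $\Hom_A(S,I_l)$), and simplicity of $S$ turns nonvanishing of these Hom-spaces into the surjection/injection conditions. The paper does not give a proof of its own here; it simply cites \cite[Corollary~2.6.5]{Ben}, so there is no alternative approach to compare against --- you have written out exactly the argument that reference would supply.
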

	
	\begin{proof}
		See for example~\cite[Corollary~2.6.5]{Ben}.
	\end{proof}

	\begin{lemma} \label{injdimcharalemma}
		Let $A$ be a finite dimensional algebra with $n$ simple modules.
		\begin{enumerate}
			\item For a natural number $k \geq 1$ and an indecomposable module $N$, we have \linebreak $\Ext_A^k(J,N)=0$ if and only if the injective dimension of $N$ is at most $k$.
			\item For algebras of finite global dimension, $\Ext_A^1(J,J) =0$ if and only if the algebra is hereditary, that is $\gldim A=1$.
		\end{enumerate}
	\end{lemma}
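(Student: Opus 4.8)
\noindent\textit{Proof plan.}

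The plan is to derive both statements from the short exact sequence $0\to J\to A\to A/J\to 0$, using that $A_A$ is projective and that $A/J$ is the direct sum of all the simple $A$-modules. For part~(1): applying $\Hom_A(-,N)$ and using $\Ext_A^i(A,N)=0$ for $i\ge 1$ produces a natural isomorphism $\Ext_A^k(J,N)\cong\Ext_A^{k+1}(A/J,N)\cong\bigoplus_S\Ext_A^{k+1}(S,N)$ for every $k\ge 1$, the sum ranging over the simple modules $S$. Hence $\Ext_A^k(J,N)=0$ if and only if $\Ext_A^{k+1}(S,N)=0$ for every simple $S$; by Lemma~\ref{extformula}(2) the latter says that no simple module embeds into the $(k+1)$-st term of a minimal injective coresolution of $N$, i.e.\ that $\id_A(N)\le k$. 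Indecomposability of $N$ plays no role.

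For part~(2), the direction ``$\Leftarrow$'' is immediate: if $A$ is hereditary, then the submodule $J$ of the projective module $A_A$ is itself projective, so $\Ext_A^1(J,-)=0$, hence $\Ext_A^1(J,J)=0$. For ``$\Rightarrow$'' assume $\Ext_A^1(J,J)=0$ and $\gldim A<\infty$. Applying part~(1) with $k=1$ and $N=J$ (this is harmless even though $J$ is in general decomposable, since both $\Ext_A^1(J,-)$ and the condition $\id_A(-)\le 1$ decompose over direct summands) yields $\id_A(J)\le 1$. The task then reduces to showing that, for an algebra of finite global dimension, $\id_A(J)\le 1$ forces $\gldim A\le 1$.

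The heart of the matter is the following assertion: if $\gldim A\ge 2$ and $\gldim A<\infty$, then $\id_A(J)\ge 2$; equivalently, the radical $\rad P(S)$ of some indecomposable projective has injective dimension at least $2$. (One even expects the sharper equality $\id_A(J)=\gldim A$ whenever $1\le\gldim A<\infty$, the trivial inequality there being $\id_A(J)\le\gldim A$.) My approach would be to start from a simple module $T$ of maximal projective dimension $d=\gldim A$: its first syzygy $\Omega^1 T=\rad P(T)$ is a direct summand of $J$, and, using that $\gldim A=\id_A(A_A)$ for finite global dimension, one shows that the injective dimension $d$ of $A_A$ is already attained among the radicals of the indecomposable projectives, whence $\id_A(J)\ge d\ge 2$. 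Granting this, the hypothesis $\id_A(J)\le 1$ forces $\gldim A\le 1$, so $A$ is hereditary and part~(2) follows.

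The step I expect to be the main obstacle is precisely this last one: extracting a radical of a projective of injective dimension at least $2$ from the assumption $\gldim A\ge 2$. Purely formal manipulation of $0\to J\to A\to A/J\to 0$ does not suffice, because for $\gldim A\ge 2$ one has $\id_A(A)=\id_A(A/J)=\gldim A$ and all the resulting subadditivity inequalities for injective dimension become vacuous; the finiteness of $\gldim A$ has to be used substantively, for instance by passing to $A^{\mathrm{op}}$ through the $K$-linear duality $D$ (which converts the statement into one about projective dimensions) together with $\gldim A=\gldim A^{\mathrm{op}}$. Alternatively one may simply invoke a known characterisation of the hereditary algebras among those of finite global dimension via the vanishing of $\Ext_A^1(J,J)$.
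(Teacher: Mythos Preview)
Your argument for part~(1) is essentially identical to the paper's: both use the dimension shift $\Ext_A^k(J,N)\cong\Ext_A^{k+1}(A/J,N)$ (you via the long exact sequence of $0\to J\to A\to A/J\to 0$, the paper via $J=\Omega^1(A/J)$, which is the same thing) and then appeal to Lemma~\ref{extformula}(2) together with the fact that every simple is a summand of $A/J$.

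For part~(2) your reduction is also the same as the paper's: decompose $J$ and use part~(1) to conclude that $\Ext_A^1(J,J)=0$ is equivalent to $\id_A(J)\le 1$, then argue that under finite global dimension this forces $\gldim A\le 1$. The paper resolves this last implication in one line by citing \cite{Mar2}, which proves the equality $\id_A(J)=\gldim A$ for algebras of finite global dimension (exactly the ``sharper equality'' you anticipate). You correctly identify this as the crux, but your sketched direct argument does not close it: taking a simple $T$ with $\pd(T)=d$ gives $\pd(\rad P(T))=d-1$, not $\id(\rad P(T))\ge 2$, and the claim that ``the injective dimension $d$ of $A_A$ is already attained among the radicals of the indecomposable projectives'' is precisely what needs proof. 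Your fallback of ``invoking a known characterisation of the hereditary algebras via the vanishing of $\Ext_A^1(J,J)$'' is circular as stated; what you want to invoke is the result of \cite{Mar2}, which is exactly what the paper does.
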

	\begin{proof}
		\phantom{aa}
		\begin{enumerate}
			\item 
			Let 
			$0 \rightarrow N \rightarrow I^0 \rightarrow \cdots \rightarrow I^i \rightarrow \cdots $
			be a minimal injective coresolution of $N$.
			We have $\Ext_A^{k}(J,N)=\Ext_A^{k}(\Omega^1(A/J),N) = \Ext_A^{k+1}(A/J,N)$. Now by \ref{extformula} (2), we see that $\Ext_A^{k+1}(A/J,N)$ is zero if and only if the term $I^{k+1}$ is zero (since $A/J$ has every simple module as a direct summand), which is equivalent to $N$ having injective dimension at most $k$.
			\item Here we use the result that the global dimension of an algebra with finite global dimension is equal to the injective dimension of its Jacobson radical, see \cite{Mar2}.
			We have $\Ext_A^1(J,J) =0$ if and only if $\Ext_A^1(J,e_i J) =0$ for all $i=1,2,\dots,n$. Now by (1) of this lemma $\Ext_A^1(J,e_i J) =0$ for all $i=1,2,\dots,n$ if and only if the injective dimension of each $e_i J$ is at most one and thus also the injective dimension of $J$ is at most one which is equivalent to $A$ having global dimension at most one.
			\qedhere	\end{enumerate}
	\end{proof}
	
	\begin{remark}
		In \cite{CIM} we show that for any finite dimensional algebra  $A$ with Jacobson radical $J$ the injective dimension of the Jacobson radical $J$ is equal to the global dimension of $A$. Using this, one can show with the same proof as in (2) of the previous lemma that $\Ext_A^1(J,J)=0$ if and only if $A$ is hereditary for general algebras $A$.
	\end{remark}
	By (2) of \ref{injdimcharalemma}, every quiver algebra $A=KQ/I$ of finite global dimension with non-zero relations $I$ has that $\Ext_A^1(J,J) \neq 0$, which motivates us to study this vector space for Nakayama algebras with a linear quiver here. Note that the number of Nakayama algebras with a linear quiver and $n$ simple modules is equal to the Catalan number $C_{n-1}$ and only one such algebra is hereditary, namely the one with Kupisch series $[n,n-1,\dots,2,1]$.
	
	We give a general statement when an indecomposable module over such an algebra has injective dimension at most one and then specialize in the next two corollaries to modules that are projective or powers of radicals of indecomposable projective modules.
	\begin{proposition} \label{translatepropo}
		Let $A$ be a Nakayama algebra. The indecomposable module $e_i A / e_i J^k$ has injective dimension at most one if and only if ($k = d_{i+k-1}$) or ( $k < d_{i+k-1}$ and $d_{i+k-1}-k=d_{i-1}$).
		$k = d_{i+k-1}$ holds if and only if $e_i A / e_i J^k$ is injective.
		
	\end{proposition}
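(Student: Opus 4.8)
The plan is to write down explicitly the first two terms of a minimal injective coresolution of $M \defeq e_i A/e_i J^k$ and to read off from them when $\id M \le 1$. First one recalls the uniserial structure over a linear Nakayama algebra: $M$ has composition factors $S_i, S_{i+1}, \dots, S_{i+k-1}$ listed from top to bottom, so $\operatorname{soc} M = S_{i+k-1}$; dually, the indecomposable injective $D(Ae_j)$ is uniserial of length $d_j$ with socle $S_j$, and being itself of the form $e_l A/e_l J^{d_j}$ it has top $S_{j-d_j+1}$, that is $D(Ae_j) \cong e_{j-d_j+1}A/e_{j-d_j+1}J^{d_j}$. An immediate consequence, which I will use twice, is that an indecomposable module $e_a A/e_a J^b$ is injective if and only if $b = d_{a+b-1}$: its socle is $S_{a+b-1}$, hence its injective envelope is $D(Ae_{a+b-1})$, of length $d_{a+b-1}$, and injectivity is equivalent to equality of lengths.

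Next, since the injective envelope of $M$ coincides with that of $\operatorname{soc} M = S_{i+k-1}$, the coresolution begins $0 \to M \to I^0 \to \cdots$ with $I^0 = D(Ae_{i+k-1})$, of dimension $d_{i+k-1}$; the inclusion $M \hookrightarrow I^0$ forces $k \le d_{i+k-1}$ in all cases. If $k = d_{i+k-1}$ then $M = I^0$ is injective, and conversely if $M$ is injective then $M = I^0$ and hence $k = d_{i+k-1}$; this settles the last sentence of the statement, and it also disposes of the equivalence in the case $k = d_{i+k-1}$.

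It then remains to treat the case $k < d_{i+k-1}$, where $M$ is not injective, so that $\id M \le 1$ is equivalent to the first cosyzygy $\Omega^{-1}(M) = I^0/M$ being injective (it is automatically nonzero here). One writes $I^0 \cong e_{a}A/e_{a}J^{d_{i+k-1}}$ with $a = i+k-d_{i+k-1}$, so its composition factors are $S_a, \dots, S_{i+k-1}$ from top to bottom; since $M$ embeds as the unique submodule of $I^0$ of length $k$ (it has the same socle $S_{i+k-1}$), the quotient $I^0/M$ is the uniserial module with top $S_a$ and length $b \defeq d_{i+k-1}-k$, i.e. $I^0/M \cong e_a A/e_a J^{b}$. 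As $a + b - 1 = i-1$, the injectivity criterion from the first paragraph says that $I^0/M$ is injective exactly when $b = d_{i-1}$, that is $d_{i+k-1}-k = d_{i-1}$, which is the asserted condition.

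The one step that needs genuine care is the bookkeeping of conventions — which end of a uniserial module is its top and which its socle, together with the attendant description $D(Ae_j) \cong e_{j-d_j+1}A/e_{j-d_j+1}J^{d_j}$ of the indecomposable injectives and the identification of $M$ with a particular submodule of $I^0$; once this is pinned down, everything else is index arithmetic. Degenerate boundary values of $i$ and $k$ cause no trouble, using the standard facts $d_0 = 1$ and $d_j = \min\{m \mid m \ge c_{j-m}\}$ recalled in the preliminaries.
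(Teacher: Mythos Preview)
Your proof is correct and follows essentially the same route as the paper: identify the injective envelope $I^0=D(Ae_{i+k-1})$, compare lengths to get the injectivity criterion $k=d_{i+k-1}$, and then check when the cosyzygy $I^0/M$ is injective by again comparing its length $d_{i+k-1}-k$ to that of its injective envelope $D(Ae_{i-1})$. The only cosmetic difference is that the paper writes the cosyzygy as $D(J^k e_{i+k-1})$ rather than in your $e_aA/e_aJ^b$ notation, but the argument is the same.
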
 
	\begin{proof}
		The following short exact sequence gives the injective envelope and cokernel of $e_i A/ e_i J^k$ (see for example the preliminaries in \cite{Mar}):
		$$0 \rightarrow e_i A/ e_i J^k \rightarrow D(A {e_{i+k-1}}) \rightarrow D(J^k {e_{i+k-1}}) \rightarrow 0.$$
		The injective envelope of $D(J^k {e_{i+k-1}})$ is $D(Ae_{i-1})$. 
		This shows that $e_i A/ e_i J^k$ is injective if and only if it is isomorphic to $D(A {e_{i+k-1}})$, which is equivalent to the condition that both modules have the same vector space dimension since $e_i A/ e_i J^k$ embeds into $D(A {e_{i+k-1}})$.
		Thus $e_i A/ e_i J^k$ is injective if and only if $k=\dim(e_i A/ e_i J^k)=\dim(D(A {e_{i+k-1}}))=d_{i+k-1}$. Now assume that $e_i A/ e_i J^k$ is not injective, which means that $k < d_{i+k-1}$.
		Then $e_i A/ e_i J^k$ has injective dimension equal to one if and only if $\Omega^{-1}(e_i A/ e_i J^k)=D(J^k {e_{i+k-1}})$ is injective which is equivalent to $D(J^k {e_{i+k-1}})$ having the same vector space dimension as its injective envelope $D(Ae_{i-1})$. This translates into the condition $d_{i+k-1} - k = \dim(D(J^k {e_{i+k-1}}))=\dim(D(Ae_{i-1}))=d_{i-1}$.
	\end{proof}
	
	\begin{corollary} \label{injdimprojectives}
		A module of the form $e_i A$ has injective dimension equal to one if and only if $c_i < d_{i+c_i-1}$ and $d_{i+c_i-1}-c_i=d_{i-1}$.
		
	\end{corollary}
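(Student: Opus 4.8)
The plan is to deduce this directly from \Cref{translatepropo} by specializing to $k=c_i$. First I would note that the indecomposable projective $e_iA$ is exactly the module $e_iA/e_iJ^{c_i}$, since $c_i=\dim(e_iA)$ forces $e_iJ^{c_i}=0$. Applying \Cref{translatepropo} with this value of $k$, the module $e_iA$ has injective dimension at most one precisely when ($c_i=d_{i+c_i-1}$) or ($c_i<d_{i+c_i-1}$ and $d_{i+c_i-1}-c_i=d_{i-1}$).

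The second step is to refine ``injective dimension at most one'' to ``injective dimension equal to one'' by isolating the injective case. Here I would invoke the part of \Cref{translatepropo} stating that $k=d_{i+k-1}$ holds if and only if $e_iA/e_iJ^k$ is injective; with $k=c_i$ this says that $c_i=d_{i+c_i-1}$ is equivalent to $e_iA$ being injective, in which case its injective dimension is $0$, not $1$. Consequently the first disjunct must be discarded, and the remaining condition $c_i<d_{i+c_i-1}$ together with $d_{i+c_i-1}-c_i=d_{i-1}$ characterizes exactly when the injective dimension is one.

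There is essentially no obstacle here; the only point that deserves a sentence is the complementary implication, namely that under the second disjunct the injective dimension is genuinely equal to one and not zero. This is immediate: $c_i<d_{i+c_i-1}$ says that $e_iA$ and its injective envelope $D(Ae_{i+c_i-1})$ have different dimensions, so $e_iA$ is not injective, hence its injective dimension is at least one; combined with the ``at most one'' conclusion from \Cref{translatepropo} it is exactly one. Assembling these observations gives the stated equivalence.
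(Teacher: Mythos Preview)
Your proposal is correct and follows exactly the paper's approach: the paper's entire proof is ``Just set $k=c_i$ in \ref{translatepropo}.'' You have simply spelled out the details of that specialization, including the identification $e_iA=e_iA/e_iJ^{c_i}$ and the exclusion of the injective case $c_i=d_{i+c_i-1}$.
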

	\begin{proof}
		Just set $k=c_i$ in Proposition \ref{translatepropo}.			
	\end{proof}
	\begin{remark}
		When $A$ has $n$ simple modules, the module $e_{n-1} J^1$ is the zero module and thus always has injective dimension zero. In fact this is the only module of the form $e_i J^1$ of injective dimension zero as those modules are proper submodules of another indecomposable module, namely $e_i A$, when they are non-zero.
		
	\end{remark}
	
	\begin{corollary} \label{corollaryradicals}
		A module of the form $e_s J^t$ for $1 \leq t \leq c_s -1$ and $s \neq n-1$ has injective dimension at most one if and only if $d_{s+c_s-1}-c_s+t=d_{s+t-1}$ and in this case the injective dimension is equal to one.
		Especially:
		$e_s J^1$ has injective dimension at most one if and only if $d_{s+c_s-1}-c_s-t=d_{s+t-1}$.

	\end{corollary}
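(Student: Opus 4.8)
The plan is to reduce the statement to \ref{translatepropo} by realising $e_s J^t$ as a proper quotient of an indecomposable projective module. For $1 \le t \le c_s-1$ the module $e_s J^t$ is nonzero of dimension $c_s-t$ and uniserial, and its simple top is $S_{s+t}$, where $S_j$ denotes the simple module at vertex $j$; as a module with top $S_{s+t}$ it is a quotient of the projective cover $e_{s+t}A$, and comparison of composition lengths yields
$$e_s J^t \cong e_{s+t}A/e_{s+t}J^{c_s-t}.$$
First I would feed this into \ref{translatepropo} with $i = s+t$ and $k = c_s-t$, recording the index shifts $i+k-1 = s+c_s-1$ and $i-1 = s+t-1$. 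This says that $e_s J^t$ has injective dimension at most one if and only if either $c_s-t = d_{s+c_s-1}$ (equivalently, $e_s J^t$ is injective) or else $c_s-t < d_{s+c_s-1}$ and $d_{s+c_s-1}-(c_s-t) = d_{s+t-1}$, the latter equality being exactly $d_{s+c_s-1}-c_s+t = d_{s+t-1}$.

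The second step is to eliminate the injective alternative. Since $t \ge 1$, the module $e_s J^t$ is a nonzero \emph{proper} submodule of the indecomposable projective module $e_s A$; an injective submodule of any module splits off as a direct summand, so if $e_s J^t$ were injective we would get $e_s J^t = e_s A$, a contradiction. Hence $e_s J^t$ is never injective. This does two things at once: it forces every instance of ``injective dimension at most one'' into the second alternative of \ref{translatepropo}, and it guarantees that when the injective dimension is at most one it is in fact equal to one. For the converse I would invoke the short exact sequence in the proof of \ref{translatepropo}, which exhibits $D(Ae_{s+c_s-1})$, of dimension $d_{s+c_s-1}$, as the injective envelope of $e_s J^t$; thus $c_s-t \le d_{s+c_s-1}$ automatically, equality is ruled out by non-injectivity, so $c_s-t < d_{s+c_s-1}$, and the equality $d_{s+c_s-1}-c_s+t = d_{s+t-1}$ then places us in the second alternative of \ref{translatepropo}. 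The ``especially'' clause for $e_s J^1$ is the specialisation $t = 1$ of this.

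I do not expect a genuine obstacle: all the homological input is already contained in \ref{translatepropo} and the remarks preceding this corollary. The only points demanding care are the identification $e_s J^t \cong e_{s+t}A/e_{s+t}J^{c_s-t}$ together with the attendant index bookkeeping, and the elementary remark that a nonzero proper submodule of an indecomposable module is not injective.
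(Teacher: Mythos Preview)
Your proof is correct and follows essentially the same route as the paper's: identify $e_s J^t \cong e_{s+t}A/e_{s+t}J^{c_s-t}$ (the paper obtains this via the projective cover $e_{s+t}A \to e_s J^t$ and the first isomorphism theorem, you via the simple top and length comparison---the same thing), then apply \ref{translatepropo} with $i=s+t$, $k=c_s-t$, and rule out the injective alternative by the proper-submodule-of-an-indecomposable argument. Your write-up is actually a bit more careful than the paper's in handling the converse direction and the strict inequality $c_s-t < d_{s+c_s-1}$.
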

	\begin{proof}
		Note that $e_s J^t$ for $t \geq 1$ is a proper submodule of $e_s A$ and thus is never injective since $e_s A$ is indecomposable and an injective proper submodule would show that $e_s J^t$ is a direct summand, which is absurd.
		The projective cover of $e_s J^t$ is given by $f_t : e_{s+t} A \rightarrow e_s J^t$ and by comparing dimensions we see that $\ker(f_t)= e_{s+t} J^{c_s -t}$. By the first isomorphism theorem, we have $e_s J^t \cong e_{s+t} A /e_{s+t} J^{c_s -t}$.
		Now we can use Proposition \ref{translatepropo} and set $i:=s+t$ and $k:=c_s-t$, to see that $e_s J ^t$ has injective dimension equal to one if and only if $d_{s+c_s-1}-c_s-t=d_{s+t-1}$.
	\end{proof}
	
	The previous results gave an algebraic characterization of modules of injective dimension at most one in Nakayama algebras. In the final section we will use a more pictorial description of those modules.
	The next result shows how to calculate $\dim(\Ext_A^1(J,J))$ in terms of radicals of indecomposable projective modules with injective dimension at most one.
	\begin{theorem} \label{extjjtheorem}
		Let $A$ be a Nakayama algebra with a linear quiver and Jacobson radical $J$.
		Then $\dim(\Ext_A^1(J,J))=n- | \{ e_i J | \id(e_i J) \leq 1 \}|$. 
		
	\end{theorem}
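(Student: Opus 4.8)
The plan is to compute $\Ext_{A}^{1}(J,J)$ one indecomposable direct summand of $J$ at a time. As a right $A$-module $J=\bigoplus_{i=0}^{n-1}e_{i}J$, and since each $e_{i}J$ is a submodule of the uniserial module $e_{i}A$ it is itself uniserial, hence indecomposable, or zero. Thus $\Ext_{A}^{1}(J,J)\cong\bigoplus_{i=0}^{n-1}\Ext_{A}^{1}(J,e_{i}J)$, and by \ref{injdimcharalemma}(1) the $i$-th summand is zero exactly when $\id(e_{i}J)\le 1$. So the theorem will follow once I show that whenever a summand $\Ext_{A}^{1}(J,e_{i}J)$ is nonzero it is one-dimensional.

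The main point, which I expect to be the real content, is the bound $\dim_{K}\Ext_{A}^{1}(J,N)\le 1$ for \emph{every} indecomposable $A$-module $N$. First I would dimension-shift: from $0\to J\to A\to A/J\to 0$ and the projectivity of $A$ one gets $\Ext_{A}^{1}(J,N)\cong\Ext_{A}^{2}(A/J,N)$, and since $A/J$ is the direct sum of all simple modules, $\Ext_{A}^{2}(A/J,N)\cong\bigoplus_{j}\Ext_{A}^{2}(S_{j},N)$. Let $0\to N\to I^{0}\to I^{1}\to I^{2}\to\cdots$ be the minimal injective coresolution of $N$. Because $A$ is a Nakayama algebra, every quotient and every injective envelope of a uniserial module is again uniserial with simple socle, so inductively each $I^{l}$ is indecomposable injective or zero. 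Minimality forces $\operatorname{soc}(I^{l})$ to lie in the kernel of the differential $I^{l}\to I^{l+1}$ for every $l\ge 0$ (that kernel being $N$ when $l=0$), hence every differential of the complex $\Hom_{A}(S_{j},I^{\bullet})$ vanishes and $\Ext_{A}^{l}(S_{j},N)\cong\Hom_{A}(S_{j},I^{l})\cong\Hom_{A}(S_{j},\operatorname{soc}(I^{l}))$ for $l\ge 1$. Since $\operatorname{soc}(I^{l})$ is simple or zero, this space is at most one-dimensional and nonzero for at most one index $j$; taking $l=2$ gives $\dim_{K}\Ext_{A}^{1}(J,N)\le 1$.

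Combining the two: for $N=e_{i}J$, if $\id(e_{i}J)\ge 2$ then \ref{injdimcharalemma}(1) gives $\Ext_{A}^{1}(J,e_{i}J)\ne 0$, which together with the bound forces $\dim_{K}\Ext_{A}^{1}(J,e_{i}J)=1$, while if $\id(e_{i}J)\le 1$ the summand vanishes. Hence $\dim_{K}\Ext_{A}^{1}(J,J)=\#\{\,i\mid\id(e_{i}J)\ge 2\,\}=n-\#\{\,i\mid\id(e_{i}J)\le 1\,\}$. Finally, for a linear Nakayama algebra at most one of the $e_{i}J$ is zero (namely $e_{n-1}J$) and the nonzero ones have pairwise distinct tops $S_{i+1}$ (from the isomorphism $e_{i}J\cong e_{i+1}A/e_{i+1}J^{\,c_{i}-1}$ in the proof of \ref{corollaryradicals}), so the $e_{i}J$ are pairwise non-isomorphic and $\#\{\,i\mid\id(e_{i}J)\le 1\,\}=|\{\,e_{i}J\mid\id(e_{i}J)\le 1\,\}|$, giving the stated formula. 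The only step that uses more than formal homological algebra is the one-dimensionality bound of the second paragraph, which rests on the Nakayama-specific fact that cosyzygies of indecomposables stay indecomposable; the accompanying bookkeeping with minimality of the injective coresolution is the delicate part, and indeed for a general finite dimensional algebra no such bound holds.
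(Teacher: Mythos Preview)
Your proof is correct and follows essentially the same route as the paper's: decompose $\Ext_A^1(J,J)$ along the summands $e_iJ$, dimension-shift to $\Ext_A^2(A/J,e_iJ)$, and use that in a Nakayama algebra the terms of a minimal injective coresolution of an indecomposable module are indecomposable to see that each nonvanishing summand is one-dimensional. You are in fact a bit more careful than the paper in two places---spelling out the minimality argument behind the dimension bound (the paper just cites \ref{extformula} as ``counting summands of $I^2$''), and checking that the $e_iJ$ are pairwise non-isomorphic so that the index count matches $|\{e_iJ\mid\id(e_iJ)\le 1\}|$---but these are refinements rather than a different strategy.
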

	
	\begin{proof}
		We have $\dim(\Ext_A^1(J,J))= \sum\limits_{i=0}^{n-1}{\dim(\Ext_A^1(J,e_i J))}$.
		Now $\dim(\Ext_A^1(J,e_i J))$ is non-zero if and only if $e_i J$ has injective dimension larger than one by Lemma \ref{injdimcharalemma}. In case\linebreak $\dim(\Ext_A^1(J,e_i J)) \neq 0$, we have that $\dim(\Ext_A^1(J,e_i J))=1$, since 
		$$\Ext_A^1(J,e_i J)=\Ext_A^1(\Omega^1(A/J),e_iJ)=\Ext_A^2(A/J,e_iJ)$$ counts by Lemma \ref{extformula} the number of indecomposable summands of $I^2$ when $(I^i)$ is a minimal injective coresolution of $e_i J$. But $I^2$ is indecomposable since $A$ is a Nakayama algebra and thus $\dim(\Ext_A^1(J,e_i J))=1$. 			
	\end{proof}
	
	Since we need to count the indecomposable projective modules with injective dimension one, the following is relevant:
	\begin{lemma} \label{samenumberprojdimone}
		Let $A$ be a Nakayama algebra.
		The number of indecomposable projective $A$-modules with injective dimension equal to one is equal to the number of indecomposable injective $A$-modules with projective dimension equal to one.
		
	\end{lemma}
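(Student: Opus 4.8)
The plan is to produce an explicit bijection between the two sets being counted, namely $e_iA \mapsto i-1$ from $\{\,i : \id(e_iA)=1\,\}$ to $\{\,j : \pd(D(Ae_j))=1\,\}$, with inverse $D(Ae_j)\mapsto j+1$; once both maps are shown to land in the stated target, the equality of cardinalities is immediate. Write $P_i=e_iA$, $I_j=D(Ae_j)$, let $[c_0,\dots,c_{n-1}]$ be the Kupisch series and $d_j=\dim I_j$, and recall that $P_i$ is uniserial with top $S_i$ and socle $S_{i+c_i-1}$, while $I_j$ is uniserial with socle $S_j$ and top $S_{j-d_j+1}$. As preliminaries I would record, dually to \ref{injdimprojectives} (or directly from the fact that $I_j$ has projective cover $P_{j-d_j+1}$ with kernel $e_{j-d_j+1}J^{d_j}$), that $\pd(I_j)=1$ iff $c_{j-d_j+1}-d_j=c_{j+1}$, and note two boundary facts that keep indices in range: $P_0$ is always injective and $I_{n-1}$ is always projective (otherwise the relevant injective module would have its top at a negative vertex), so in fact $i\geq 1$ and $j\leq n-2$ above.

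For the forward implication, assume $\id(P_i)=1$; by \ref{injdimprojectives} this says $c_i<m$ and $d_{i-1}=m-c_i$, where $m:=d_{i+c_i-1}$. Set $s:=(i-1)-d_{i-1}+1=i+c_i-m$, so that $P_s$ is the projective cover of $I_{i-1}$ and $\Omega(I_{i-1})=\rad^{d_{i-1}}(P_s)$ is uniserial of length $c_s-d_{i-1}$ with top $S_{s+d_{i-1}}=S_i$; being a quotient of its own projective cover $P_i$, it has length $\leq c_i$. On the other hand the minimality built into $m=d_{i+c_i-1}=\min\{k\geq 1: c_{i+c_i-1-k}\leq k\}$ forces $c_s=c_{i+c_i-m}\geq m$ (the value $k=m-1$ fails), so that length is $\geq m-d_{i-1}=c_i$. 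Hence $\Omega(I_{i-1})$ is a length-$c_i$ quotient of $P_i$, i.e. $\Omega(I_{i-1})\cong P_i$; it is projective and nonzero, so $\pd(I_{i-1})=1$. The reverse implication is the mirror image: if $\pd(I_j)=1$ then, with $t:=j-d_j+1$, one gets $c_t-d_j=c_{j+1}$ and $\Omega(I_j)\cong P_{j+1}=\rad^{d_j}(P_t)$; the minimality in $d_j=\min\{k\geq 1:c_{j-k}\leq k\}$ gives $c_{t-1}\leq d_j<c_t$ (for $t=0$ use instead that $P_0$ is injective), and combined with the Kupisch growth conditions $c_\ell\leq c_{\ell+1}+1$ this yields $d_{t+c_t-1}=c_t$, i.e. $P_t$ is injective; then $\Omega^{-1}(P_{j+1})\cong P_t/P_{j+1}$ is uniserial with socle $S_j$ and length $d_j$, hence is $I_j$, and since $P_{j+1}$ is a proper submodule of the indecomposable $P_t$ it is not injective, so $\id(P_{j+1})=1$. (Alternatively, this reverse implication follows from the forward one applied to $A^{\mathrm{op}}$ — again a linear Nakayama algebra — via the standard $K$-duality, which swaps projectives with injectives and $\pd$ with $\id$.)

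The module theory here is routine once set up; the two places where real work happens are the numerical identities $c_{i+c_i-m}=m$ in the forward direction and $d_{t+c_t-1}=c_t$ (the injectivity of $P_{j-d_j+1}$) in the reverse one. Each is squeezed between an inequality coming from the defining $\min$-formula for the coKupisch numbers $d_\ell$ and an inequality coming either from the Kupisch growth conditions $2\leq c_\ell\leq c_{\ell+1}+1$ (with $c_{n-1}=1$) or from the trivial bound ``a module is a quotient of its projective cover''; pinning down exactly which inequalities to combine is the only delicate point, and I expect it to be the main obstacle.
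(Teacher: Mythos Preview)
Your proof is correct, and in fact the bijection you construct is the same one the paper uses: once you have shown $\Omega(I_{i-1})\cong P_i$, your short exact sequence $0\to P_i\to P_s\to I_{i-1}\to 0$ is exactly $0\to P_i\to I(P_i)\to\Omega^{-1}(P_i)\to 0$, since your computation $c_s=m=d_{i+c_i-1}$ together with $s+c_s-1=i+c_i-1$ identifies $P_s$ with the injective envelope $I_{i+c_i-1}$ of $P_i$. So both arguments establish that $\Omega^{-1}$ carries projectives of injective dimension one to injectives of projective dimension one.

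Where the arguments differ is in how this is verified. The paper invokes the single structural fact that Nakayama algebras have dominant dimension at least one (so $I(P)$ is automatically projective--injective), and the lemma then drops out in three lines; this version works verbatim for any algebra of dominant dimension $\geq 1$. You instead carry out the explicit Kupisch-series bookkeeping, squeezing $c_s$ between the inequality coming from the $\min$-formula for $d_{i+c_i-1}$ and the trivial bound from the projective cover. Your route is longer and specific to the linear case, but it is entirely self-contained and makes the index shift $i\mapsto i-1$ visible, which is pleasant given how the rest of the paper works with indices. The duality shortcut you mention for the reverse direction is also perfectly legitimate and would spare you the second round of numerics.
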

	\begin{proof}
		Since Nakayama algebras have dominant dimension at least one, see \cite{AnFul}, we have for each indecomposable projective module $P$ of injective dimension equal to one the following short exact sequence:
		
		$$0 \rightarrow P \rightarrow I(P) \rightarrow \Omega^{-1}(P) \rightarrow 0.$$
		Here $I(P)$ is the injective envelope of $P$. Since $P$ has injective dimension one, $\Omega^{-1}(P)$ is injective and since $I(P)$ is projective-injective (using that the dominant dimension is at least one) $\Omega^{-1}(P)$ has projective dimension one.
		Thus $\Omega^{-1}(-)$ induces a bijection between the set of indecomposable projective modules of injective dimension one and indecomposable injective modules with projective dimension one with inverse $\Omega^1(-)$.
	\end{proof}
	Another consequence of Nakayama algebras having dominant dimension at least one is that we have $\id (e_i J^1) \leq 1$ if and only if $\Omega^{-1}(e_i J^1)$ is an indecomposable injective module whose first Syzygy is a radical of a projective module. We note this also as a lemma:
	\begin{lemma} \label{lemmaext1jj}
		Let $A$ be a Nakayama algebra. Then $| \{ e_i J | \id(e_i J) \leq 1 \}|$ equals the number of indecomposable injective modules $I$ such that $\Omega^1(I)$ is isomorphic to the radical of a projective module.
	\end{lemma}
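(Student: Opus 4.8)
The plan is to set up an explicit bijection between $\{e_iJ \mid \id(e_iJ)\le 1\}$ and the indecomposable injectives appearing in the statement, with the cosyzygy $\Omega^{-1}$ and the syzygy $\Omega^1$ as the two directions. I would first record two consequences of the fact that every Nakayama algebra has dominant dimension at least one: the injective envelope $I(e_iA)$ of each indecomposable projective is projective–injective, and, since $e_iJ=\rad(e_iA)$ has the same socle as $e_iA$, also $I(e_iJ)=I(e_iA)$; dually, the projective cover $P(I)$ of every indecomposable injective $I$ is projective–injective. I also use that $e_iA/e_iJ\cong S_i$, that $D(Ae_i)$ is the injective envelope of $S_i$, and that $D(Ae_{n-1})$ is itself projective: at the top vertex $t$ of $D(Ae_{n-1})$ the Kupisch inequality $c_t\le n-t$ is forced to be an equality, so $D(Ae_{n-1})=e_tA$.

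For the forward direction I attach to each $e_iJ$ with $\id(e_iJ)\le 1$ the indecomposable injective $D(Ae_i)$ and claim $\Omega^1(D(Ae_i))\cong e_iJ=\rad(e_iA)$. If $e_iJ\ne 0$, then $e_iJ$ is a proper submodule of the indecomposable $e_iA$, hence a non-injective indecomposable uniserial module, so $\Omega^{-1}(e_iJ)=I(e_iA)/e_iJ$ is a nonzero uniserial quotient, hence indecomposable, and injective because $\id(e_iJ)\le 1$; as $S_i\cong e_iA/e_iJ$ embeds into it, its socle is $S_i$, so $\Omega^{-1}(e_iJ)\cong D(Ae_i)$. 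Moreover $e_iJ$ is a proper submodule of the uniserial projective module $I(e_iA)$, so the surjection $I(e_iA)\twoheadrightarrow D(Ae_i)$ is a projective cover, giving $\Omega^1(D(Ae_i))\cong e_iJ$. If $e_iJ=0$ (that is, $i=n-1$), then $D(Ae_{n-1})$ is projective, so $\Omega^1(D(Ae_{n-1}))=0=\rad(e_{n-1}A)$. This map is injective, since $e_iJ$ is recovered as $\Omega^1(D(Ae_i))$.

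It remains to show that every indecomposable injective $I$ with $\Omega^1(I)\cong\rad(P)$ for some indecomposable projective $P$ arises this way; this is where the work lies. If $\Omega^1(I)\ne 0$, it is a nonzero submodule of the uniserial projective–injective module $P(I)$, hence contains the socle of $P(I)$ and is therefore essential in $P(I)$; as $P(I)$ is injective this forces $P(I)=I(\Omega^1(I))$ and $I\cong\Omega^{-1}(\Omega^1(I))$, so $\id(\Omega^1(I))\le 1$. Writing $I=D(Ae_i)$, one computes $\Omega^1(I)=e_{i-d_i+1}J^{\,d_i}$, which has top $S_{i+1}$; comparing with the top $S_{s+1}$ of $\rad(e_sA)=e_sJ$ gives $s=i$, so $\Omega^1(I)\cong e_iJ$ with $\id(e_iJ)\le 1$, and $I$ is in the image. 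The delicate case is $\Omega^1(I)=0$, i.e.\ $I$ projective–injective: then $\rad(P)=0$, and one checks that $\Omega^1(D(Ae_i))=0$ is isomorphic to $\rad(e_iA)$ precisely for $i=n-1$, so that ``radical of a projective module'' must be read as $\rad(e_iA)$ for the vertex $i$ attached to $I=D(Ae_i)$; with that reading $D(Ae_{n-1})$ is the only projective–injective counted, matching the unique zero radical $e_{n-1}J$. Together with the forward direction this gives the bijection and hence the asserted equality of cardinalities. I expect this bookkeeping around the zero radical and the projective–injective modules to be the only real obstacle; everything else is the uniserial combinatorics of Nakayama algebras together with the two consequences of dominant dimension at least one.
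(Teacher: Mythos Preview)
Your approach is the paper's approach: the sentence preceding the lemma is the paper's entire proof, and it is exactly your bijection via $\Omega^{-1}$ and $\Omega^{1}$, using that dominant dimension at least one makes $I(e_iJ)=I(e_iA)$ projective--injective. Your write-up simply fills in the uniserial bookkeeping that the paper omits.

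Your worry about the projective--injective case is legitimate and not merely pedantic. If one reads ``$\Omega^1(I)$ is isomorphic to the radical of a projective module'' literally, then every projective--injective $I$ qualifies (since $0\cong\rad(e_{n-1}A)$), and the two cardinalities do not match in general; for instance for the Kupisch series $[2,2,1]$ one gets $|\{e_iJ:\id(e_iJ)\le 1\}|=2$ but three indecomposable injectives satisfy the literal condition. The paper is aware of this: in the proof of \Cref{PP} the phrase ``including also the $0$'' adds exactly one projective--injective to the count, so what is really being counted there is the non-projective injectives with $\Omega^1(I)$ a nonzero radical, plus one for the pair $(e_{n-1}J,D(Ae_{n-1}))$. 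Your reformulation---pairing $D(Ae_i)$ with $\rad(e_iA)$ by index---is an equivalent way to make the statement precise, and with that reading your bijection is correct. So nothing is missing from your argument; the only issue is that the lemma as phrased is slightly informal, and you have correctly located where the looseness lies.
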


	\section{Relation to 321-avoiding permutations and proofs}
	\subsection{Dyck paths}\label{dp}
	A \emph{Dyck $n$-path} $\mathcal{D}$ is a lattice path from $(0,0)$ to $(2n,0)$ consisting of $n$ number of upsteps $u=(1,1)$ and $n$ number of downsteps $d=(1,-1)$ that never dip below the axis $y=0$. A \emph{peak} vertex of $\mathcal{D}$ is a vertex preceded by a $u$ and followed by a $d$. Analogously, a \emph{valley} vertex of $\mathcal{D}$ is a vertex preceded by a $d$ and followed by a $u$. We refer to Figure \ref{fig}  for an illustration of our conventions.
	
	The \emph{level} of a point $(a,b)$ of $\mathcal{D}$ is the number $b+1$. In Figure \ref{fig}, for example, the peaks are of levels $4, 4, 6, 4$ and $6$, while the valleys are of levels $3,3$ and $1$.  
	
	The \emph{ascent sequence} $a=(a_1, a_2, \dots, a_{\ell})$ of $\mathcal{D}$ is a sequence of non-negative integers $a_i$, which correspond to the contiguous upsteps of $\mathcal{D}$. It is $a_1+\dots+a_{\ell}=n$. Similarly, we can define the \emph{descent sequence} $d=(d_1, d_2, \dots, d_{\ell})$ of $\mathcal{D}$. We write $\mathcal{D}=\prod_{i=1}^{\ell}u^{a_i}d^{d_i}$.
	
	For each $i=1,\dots, \ell-1$ we can define the partial sums $A_i:=\sum_{j=1}^ia_j$ and $D_i:=\sum_{j=1}^id_j$. The reason we omit the case $i=\ell$ is because we always have $A_{\ell}=D_{\ell}=n$.
	For the Dyck path $\mathcal{D}=u^nd^n$, these partial sums are the empty sequences.
	By the definition of a Dyck $n$-path we have:
	
	$$\begin{array}{lcl}
		&1\leq A_1<A_2<\cdots<A_{\ell-1}\leq n-1,&\smallbreak\smallbreak\\
		&1\leq D_1<D_2<\cdots<D_{\ell-1}\leq n-1,&\smallbreak\smallbreak\\
		&D_i\leq A_i, \text{ for } 1\leq i \leq \ell-1.&
	\end{array}$$

	We call the sequences $A:=(A_1,\dots,A_{\ell-1})$ and $D:=(D_1,\dots,D_{\ell-1})$  the \emph{partial ascent code} and the \emph{partial descent code} of $\mathcal{D}$, respectively. 
	We also call the pair  $(A,D)$ the \emph{partial-sum ascent-descent code} of $\mathcal{D}$.

	\begin{figure}[h]
		\centering
		\includegraphics[width=0.7\linewidth]{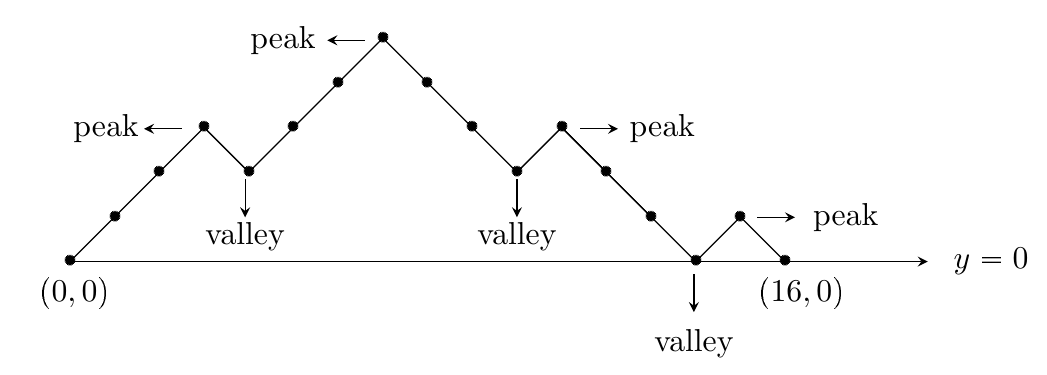}
		\caption{{An example of a Dyck $8$-path}}
		\label{fig}
	\end{figure}

	\begin{example}\label{exx} The Dyck path of Figure \ref{fig} is the Dyck $8$-path
		$\mathcal{D}=u^3d^1u^3d^3u^1d^3u^1d^1$ with ascent sequence $a=(3,3,1,1)$ and descent sequence $d=(1,3,3,1)$. Therefore, the  partial-sum ascent-descent code of $\mathcal{D}$ is $(A,D)=\big((3,6,7),\,(1,4,7)\big)$.
	\end{example}
	As mentioned in the previous section we can associate to every linear Nakayama algebra a canonical Dyck path via the top boundary of the Auslander-Reiten quiver. We assume that the reader is familiar with this construction and refer to the preliminaries of \cite{MRS} for full details. We just give one example here. 
	\begin{example}
		Let $A$ be the Nakayama algebra with the following quiver $Q$
		$$\begin{xymatrix}{ \circ^0 \ar[r]^{\alpha_1} & \circ^1 \ar[r]^{\alpha_2} & \circ^2 \ar[r]^{\alpha_3} & \circ^{3} \ar[r]^{\alpha_4} & \circ^{4}}\end{xymatrix}$$
		and relations $I=<\alpha_1 \alpha_2, \alpha_3 \alpha_4>$. Then the Kupisch series of $A$ is given by $[2,3,2,2,1]$ and the corresponding Dyck path is given by $uduuddud$. 	
	\end{example}
	Let $\mathcal{D}:=\prod_{i=1}^{\ell}u^{a_i}d^{d_i}$ a Dyck $n$-path. We define a sequence of natural numbers $k_i$, $i=1,\dots, \ell$ as follows:
	\begin{itemize}
		\item $k_1=1$.
		\item $k_i=k_{i-1}+a_{i-1}-d_{i-1}$, for all $i=2,\dots, \ell$.
	\end{itemize}
	The number $k_1$ corresponds to the level of  the point $(0,0)$ and the numbers $k_i$, $i=2,\dots, \ell$ correspond to level of the valleys of $\mathcal{D}$ (see Figure \ref{figg}).
	\begin{figure}[h]
		\centering
		\includegraphics[width=0.7\linewidth]{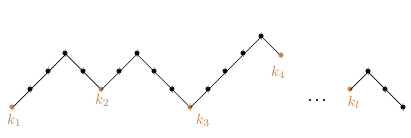}
		\caption{The points with level $k_i$}
		\label{figg}
	\end{figure}
	\begin{lemma}\label{l1} Let $\mathcal{D}:=\prod_{i=1}^{\ell}u^{a_i}d^{d_i}$ be a Dyck $n$-path and $k_i$ the natural numbers defined above. We have:
		\begin{enumerate}
			\item $k_i=1+A_{i-1}-D_{i-1}$, for all $i=2,\dots, \ell$.
			\item $k_{\ell}+a_{\ell}-d_{\ell}=1$.
		\end{enumerate}
	\end{lemma}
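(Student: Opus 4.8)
The plan is to prove (1) by a straightforward induction on $i$, unwinding the defining recursion $k_i = k_{i-1} + a_{i-1} - d_{i-1}$ with initial value $k_1 = 1$, and then to deduce (2) from (1) using only the fact that $a_1 + \cdots + a_\ell = d_1 + \cdots + d_\ell = n$.

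For the base case of (1), take $i = 2$: by definition $k_2 = k_1 + a_1 - d_1 = 1 + a_1 - d_1$, which is exactly the claimed formula with the (length-one) sums $a_1$ and $d_1$. For the inductive step, suppose $k_{i-1} = 1 + (a_1 + \cdots + a_{i-2}) - (d_1 + \cdots + d_{i-2})$ for some $i$ with $3 \leq i \leq \ell$; then applying the recursion once more gives
\[
k_i = k_{i-1} + a_{i-1} - d_{i-1} = 1 + (a_1 + \cdots + a_{i-2} + a_{i-1}) - (d_1 + \cdots + d_{i-2} + d_{i-1}),
\]
which is the formula for $i$. This settles (1).

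For (2), if $\ell \geq 2$ I would substitute $i = \ell$ into the formula just proved, obtaining
\[
k_\ell + a_\ell - d_\ell = \bigl(1 + (a_1 + \cdots + a_{\ell-1}) - (d_1 + \cdots + d_{\ell-1})\bigr) + a_\ell - d_\ell = 1 + \sum_{j=1}^{\ell} a_j - \sum_{j=1}^{\ell} d_j,
\]
and since $\sum_{j=1}^{\ell} a_j = \sum_{j=1}^{\ell} d_j = n$ the right-hand side equals $1$. The degenerate case $\ell = 1$ is handled directly: there $k_1 = 1$ and the whole path is $u^{a_1} d^{d_1} = u^n d^n$, so $a_1 = d_1 = n$ and $k_1 + a_1 - d_1 = 1$ again.

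I do not expect any genuine obstacle here: both statements are elementary telescoping identities, and the only points requiring a moment of care are getting the index ranges of the partial sums right in the induction and separating off the trivial case $\ell = 1$ (equivalently the path $\mathcal{D} = u^n d^n$) in part (2), where the partial ascent and descent codes are empty.
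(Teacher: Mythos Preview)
Your proof is correct and follows exactly the same approach as the paper: induction on $i$ using the recursive definition of $k_i$ for part (1), and then substituting (1) together with $\sum a_j = \sum d_j = n$ for part (2). Your write-up is simply more detailed than the paper's one-line sketch, and the explicit treatment of the degenerate case $\ell = 1$ is a harmless addition.
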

	\begin{proof}
		$(1)$ follows by using the definition of $k_i$ and induction on $i$. For $(2)$ we have: 
		$k_{\ell}+a_{\ell}-d_{\ell}\stackrel{(1)}{=}1+(a_1+\dots+a_{\ell})-(d_1+\dots+d_{\ell})=1+n-n=1$.
	\end{proof}
	We finish this section by reminding the reader of the basics on linear Nakayama algebras, in particular where the  projective covers, injective envelopes, and the Syzygies of indecomposable modules are located in the Dyck path corresponding to the Auslander-Reiten quiver of the algebra. This is explained in standard texts on Auslander-Reiten theory such as \cite{ARS} and in a combinatorial context in \cite{MRS}. 
	
	The indecomposable $A$-modules correspond to the lattice points with coordinates $(x_I, x_I-2\alpha)$,
	$\alpha=0,\dots,n$ in the region enclosed by the path and the $x$-axis.
	In the example of Figure \ref{na}, these are exactly the black dots. In the same example, we choose an indecomposable $A$-module $M$, by drawing a red circle around it.
	
	One can notice that each point  $(x_I, x_I-2\alpha)$ is the intersection of two diagonals; a ``right'' diagonal $R_I:\, y=x-2\alpha$ and a ``left'' diagonal $L_I:\, y=-x+2(x_I-\alpha)$. The projective cover of the corresponding module is depicted by the upper point obtained by intersecting $L_I$ and the diagram, while its injective envelope is depicted by the upper point obtained by intersecting $R_I$ and the diagram. In the example of Figure \ref{na}
	the point inside the blue circle corresponds to the projective cover of $M$,  while the point inside the green circle corresponds to its injective envelope.

	We now calculate the level of the point corresponding to the first Sygygy of an indecomposable $A$-module, by subtracting the level of the indecomposable module from the level of its projective cover. Then we depict the first Sygygy in the left diagonal the projective cover belongs to. In our example, the first Sygygy of $M$ corresponds to the point inside the  purple circle.
	
	Lastly, the radical of an indecomposable $A$-module is always one level below and in the same left diagonal as its projective cover. In our example, the radical of $M$ corresponds to the point inside the  yellow circle.

	\begin{figure}[h]
		\centering
		\includegraphics[width=1\linewidth]{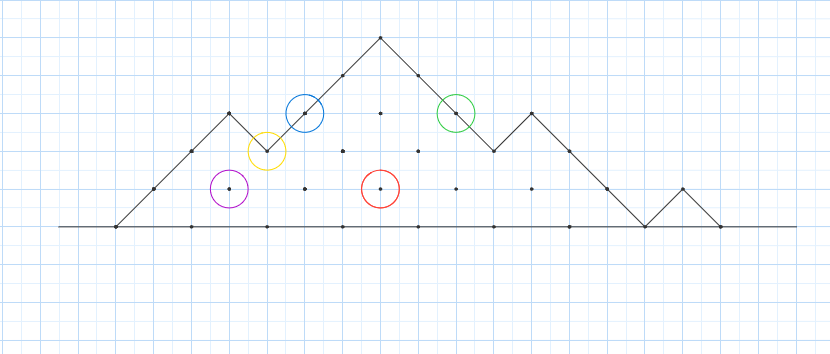}
		\caption{An example of an indecomposable $A$-module}
		\label{na}
	\end{figure}
	
	As a result of the above depiction, indecomposable projective-injective modules correspond to the peaks of the diagram and the indecomposable modules with dominant and codominant dimension at least 1 correspond to its valleys.

	\subsection{$321$-avoiding permutations} A $321$-avoiding permutation on $[n]$ is a permutation $\pi$ on $[n]$ such that there is no triple $i<j<k$ with $\pi(k)<\pi(j)<\pi(i)$.
	\begin{example}
		The permutation 	$\left(\begin{matrix}
			1 & 2 & 3 & 4&5&6& 7 & 8\\
			8&1  &5 &2 &4&3&6 &7 
		\end{matrix}\right) \in S_8$ is not a $321$-avoiding permutation, since, for example, $\pi(6)<\pi(3)<\pi(1)$.
	\end{example}
	
	\subsection{The Billey-Jockusch-Stanley bijection} This bijection \cite{BJS} is a bijection between the set of Dyck $n$-paths and the set of $321$-avoiding permutations on $\left[n\right]$ and it is described as follows: Let $\mathcal{D}$ be a Dyck path with partial-sum ascent-descent code $(A, D)$, as described in section \ref{dp}. We now obtain a partial permutation, where $A+1$ are its excedance values and $D$ its excedance locations. Filling in the missing entries in increasing order, we obtain a $321$-avoiding permutation.
	\begin{example}\label{exxx}
		Let $\mathcal{D}=u^3d^1u^3d^3u^1d^3u^1d^1$ be the Dyck $8$-path of Example \ref{exx} with partial-sum ascent-descent code $(A,D)=\big((3,6,7),\,(1,4,7)\big)$. It is $A+1=(4,7,8)$ and, hence, according to 
		the Billey-Jockusch-Stanley bijection we obtain the following partial permutation on $\left[8\right]$, with excedance values $(4,7,8)$ and excedance locations $(1,4,7)$.
		$$\left(\begin{matrix}
			1 & 2 & 3 & 4&5&6& 7 & 8\\
			4 &  & & 7&&&8 & 
		\end{matrix}\right)$$
		Filling in the missing entries in increasing order, we obtain the following $321$-avoiding permutation:
		$$\left(\begin{matrix}
			1 & 2 & 3 & 4&5&6& 7 & 8\\
			4 &	1 & 2& 7&3&5&8 & 6
		\end{matrix}\right)$$
	\end{example}
	\subsection{The inverse of the Billey-Jockusch-Stanley bijection}\label{inv} Let $\pi \in S_n$ be a 321-avoiding permutation and let $L:=\{i_1,i_2,\dots, i_{r}\}$ be the set of all excedance locations of $\pi$ (i.e. $\pi(i_k)>i_k$ for all $k=1,\dots, r$). We order the elements of $L$, such that $i_1<i_2<\dots<i_{r}$. Since $\pi$ is a 321-avoiding permutation, we have $\pi(i_1)<\pi(i_2)<\dots<\pi(i_{r})$. We now define a Dyck $n$-path as follows:
	\begin{itemize}
		\item The partial ascend code is $A:=(\pi(i_1)-1,\pi(i_2)-1,\dots,\pi(i_{r})-1)$. Hence, the ascent sequence is the following: $a=(a_1,a_2,\dots, a_{r}, a_{{r}+1})$, where $a_1=\pi(i_1)-1$, $a_{j}=\pi(i_{j})-\pi(i_{j-1})$, for all $j=2,\dots, r$ and $a_{r+1}=n+1-\pi(i_{\ell})$.
		\item 	The partial descend code is $D:=(i_1,i_2,\dots,i_{r})$. Hence, the descent sequence is the following: $d=(d_1,d_2,\dots, d_{r}, d_{r+1})$, where $d_1=i_1$, $d_{j}=i_{j}-i_{j-1}$, for all $j=2,\dots, r$ and $d_{r+1}=n-i_{r}$.
	\end{itemize}
	\begin{example}Let 
		$\pi = \left(\begin{matrix}
			1 & 2 & 3 & 4&5&6& 7 & 8\\
			4 & 1 & 2 & 7&3&5&8 & 6 
		\end{matrix}\right)
		\in S_8$ be the $321$-avoiding permutation we calculated in Example \ref{exxx}. We apply the inverse of the Billey-Jockusch-Stanley bijection. It is $r=3$, $(i_1, i_2, i_3)=(1,4,7)$ and $(\pi(i_1),\pi(i_2),\pi(i_3))=(4,7,8)$. The ascent sequence is then $a=(3,3,1,1)$ and the descent sequence is $d=(1,3,3,1)$, which correspond to the Dyck path.
	\end{example}
	\begin{lemma}\label{zz}
		Let $\pi$ be a $321$-avoiding permutation on $[n]$ with  excedance locations  $i_1<i_2<\dots<i_{r}$ and let $D=\prod_{i=1}^{r+1}u^{a_i}d^{d_i}$ be the corresponding Dyck $n$-path, obtained from the inverse of the Billey-Jockusch bijection. Then, for all $j=1,2,\dots, r$ we have:
		\begin{enumerate}
			\item $A_j=\pi(i_j)-1$.
			\item $D_j=i_j$.
		\end{enumerate}
	\end{lemma}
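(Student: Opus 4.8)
The plan is to observe that both identities are immediate telescoping sums, once we unfold the definitions of the ascent and descent sequences given in Section~\ref{inv}. Recall that there we set $a_1=\pi(i_1)-1$ and $a_j=\pi(i_j)-\pi(i_{j-1})$ for $j=2,\dots,r$, and likewise $d_1=i_1$ and $d_j=i_j-i_{j-1}$ for $j=2,\dots,r$. So the statement is really a bookkeeping consequence of the inverse Billey--Jockusch--Stanley construction, and the only thing to do is check that the sums collapse correctly, together with the base case.

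First I would prove $(1)$ by induction on $j$ (or equivalently by writing out the telescoping sum directly). For $j=1$ the claim is $a_1=\pi(i_1)-1$, which is the definition. For the inductive step, assuming $a_1+\dots+a_{j-1}=\pi(i_{j-1})-1$, we add $a_j=\pi(i_j)-\pi(i_{j-1})$ to obtain
$$a_1+\dots+a_j=\bigl(\pi(i_{j-1})-1\bigr)+\bigl(\pi(i_j)-\pi(i_{j-1})\bigr)=\pi(i_j)-1,$$
as required. Here it is worth noting that this computation is valid precisely in the range $1\le j\le r$, since $a_j$ has the stated form only for $j\le r$ (the last entry $a_{r+1}=n+1-\pi(i_r)$ is of a different shape and is deliberately excluded from the statement).

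The argument for $(2)$ is entirely parallel: the base case $d_1=i_1$ is again the definition, and adding $d_j=i_j-i_{j-1}$ telescopes $i_1+(i_2-i_1)+\dots+(i_j-i_{j-1})$ down to $i_j$. Since there is no genuine obstacle here, the only point requiring any care is simply making sure the indices stay within the range $j\le r$ so that we use the ``interior'' formulas for $a_j$ and $d_j$ rather than the final terms $a_{r+1}$ and $d_{r+1}$; this is exactly why the lemma is stated for $j=1,\dots,r$ only. (As a sanity check, extending the sums to $j=r+1$ recovers $a_1+\dots+a_{r+1}=n$ and $d_1+\dots+d_{r+1}=n$, consistent with Lemma~\ref{l1}(2).)
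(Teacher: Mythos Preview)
Your proof is correct and takes essentially the same approach as the paper, whose proof reads in full: ``It follows directly from the definition of the inverse of the Billey-Jockusch bijection.'' You have simply made explicit the telescoping that the paper leaves to the reader; indeed, Section~\ref{inv} already records $A_j=\pi(i_j)-1$ and $D_j=i_j$ as the partial ascent and descent codes, so the lemma is just a restatement of those definitions.
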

	\begin{proof}
		It follows directly from the definition of the inverse of the Billey-Jockusch \linebreak bijection.
	\end{proof}
	\subsection{The kernels of the projective covers of the indecomposable injective modules} 
	The next proposition counts the number of indecomposable injective modules with projective dimension one in a linear Nakayama algebra.
	\begin{proposition}\label{P}
		Let $A$ be an $(n+1)$-linear Nakayama algebra with corresponding Dyck $n$-path $\mathcal{D}$.
		The number of points $P$, which correspond to the the indecomposable injective modules with projective dimension one is:
		\begin{enumerate}
			\item $n$, if $\mathcal{D}=u^nd^n$.
			\item $d_1-1+\sum_{i=2}^{\ell-1}\max\{d_i-k_i-1,0\}+a_{\ell}-1$, if  $\mathcal{D}=\prod_{i=1}^{\ell}u^{a_i}d^{d_i}$, $\ell\geq 2$.
		\end{enumerate}
	\end{proposition}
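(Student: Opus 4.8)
The plan is to convert the geometric count into a purely arithmetic condition on the Kupisch and co-Kupisch series and then to evaluate that condition one run of $\mathcal{D}$ at a time. First I would invoke \cref{samenumberprojdimone} to replace ``indecomposable injective of projective dimension one'' by ``indecomposable projective $e_jA$ of injective dimension one''. Writing $\delta_i:=\dim D(Ae_i)$ for the injective dimensions (to avoid a notational clash with the descent-run lengths $d_i$ of $\mathcal{D}$), \cref{injdimprojectives} says that $e_jA$ has injective dimension one exactly when $\delta_{j+c_j-1}-c_j=\delta_{j-1}$, the strict inequality appearing there being automatic since $\delta_{j-1}\ge1$; and since $e_0A$ is always projective-injective in a linear Nakayama algebra, only $j\in\{1,\dots,n\}$ can occur. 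The next step is to recall the standard dictionary between $\mathcal{D}=\prod_{i=1}^{\ell}u^{a_i}d^{d_i}$ and these two series: with partial sums $A_i,D_i$ and valley levels $k_i=1+A_{i-1}-D_{i-1}$ as in \cref{l1}, one has $c_j=1+A_q-j$ whenever $D_{q-1}\le j\le D_q-1$ (and $c_n=1$), while $\delta_0=1$ and $\delta_j=1+j-D_{q-1}$ whenever $A_{q-1}<j\le A_q$ --- concretely, $c_j-1$ is the height of $\mathcal{D}$ just before its $(j+1)$-st down-step and $\delta_j-1$ is its height just after its $j$-th up-step.

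The key observation is that $j+c_j-1=A_q$ as soon as $D_{q-1}\le j\le D_q-1$, and likewise $n+c_n-1=A_\ell$; hence $\delta_{j+c_j-1}=\delta_{A_q}=1+A_q-D_{q-1}$, and the contribution condition collapses to the identity
$$\delta_{j-1}=j-D_{q-1},$$
where $q=q(j)$ denotes the index of the descent run of $\mathcal{D}$ containing the $(j+1)$-st down-step, with $q(n):=\ell$. For $j=1$ this reads $D_{q-1}=0$, i.e.\ $q(1)=1$, which holds precisely when $d_1\ge2$. For $j\ge2$ one writes $\delta_{j-1}=1+(j-1)-D_{q'-1}$ with $q'$ the index of the ascent run containing the $(j-1)$-st up-step; the identity then becomes $D_{q'-1}=D_{q-1}$, that is $q'=q$ --- the $(j-1)$-st up-step and the $(j+1)$-st down-step must lie in the ascent run and the descent run of one and the same index.

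It remains to count the admissible $j$ run by run. Using $A_i\ge D_i$ (equivalently $k_{i+1}\ge1$), the two run-index constraints confine $j$ to the interval $[A_{q-1}+2,\,D_q-1]$ for an interior run $2\le q\le\ell-1$, and its length equals $\max\{0,\,D_q-A_{q-1}-2\}=\max\{d_q-k_q-1,\,0\}$ because $A_{q-1}-D_{q-1}=k_q-1$ and $D_q-D_{q-1}=d_q$. For the first run the same analysis gives $[2,\,d_1-1]$, to which the separately treated index $j=1$ adds one more exactly when $d_1\ge2$, for a total of $d_1-1$; for the last run it gives $[A_{\ell-1}+2,\,n-1]$, to which the index $j=n$ --- which corresponds to $e_nA=S_n$ and is not covered by the generic formula for $c_j$, so must be checked directly --- adds one more exactly when $a_\ell\ge2$, for a total of $a_\ell-1$. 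Summing over the runs yields part (2). When $\ell=1$ we have $\mathcal{D}=u^nd^n$, so $\delta_j=j+1$ for all $j$ and the identity $\delta_{j-1}=j-D_0=j$ holds for every $j\in\{1,\dots,n\}$; this gives part (1). I expect the only real obstacle to be the accuracy of this last step --- synchronising the two run-index conditions, correctly handling the two boundary indices $j=1$ and $j=n$, seeing why the first and last runs contribute the full amounts $d_1-1$ and $a_\ell-1$ while an interior run contributes only $\max\{d_q-k_q-1,0\}$ (the window of valid indices shrinking as the run sits higher off the axis), and checking that $\ell=1$ is a genuine exception rather than the value $2n-2$ that naively specialising part (2) would suggest.
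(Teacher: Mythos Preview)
Your argument is correct, but it follows a genuinely different route from the paper's own proof. The paper does \emph{not} pass through \cref{samenumberprojdimone}; it counts the indecomposable injective modules of projective dimension one directly and pictorially. For each peak $u^{a_i}d^{d_i}$ the paper lists the non-projective injectives as the lattice points on the right side of that peak, identifies the level of the first syzygy of each, and asks whether that level lands on the left (ascent) side of the same peak --- which is the geometric meaning of the syzygy being projective. The case split into $i=1$, $i=\ell$, and $1<i<\ell$ is then a matter of reading off which levels fall in the correct range, using only the inequalities $k_2\ge1$ (for the first run), \cref{l1}(2) (for the last run), and $k_{i+1}\ge1$ (for interior runs).

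Your approach instead dualises via \cref{samenumberprojdimone} to projectives of injective dimension one, imports the purely numerical criterion of \cref{injdimprojectives}, and then feeds in explicit closed formulas for the Kupisch and co-Kupisch series in terms of the partial sums $A_q,D_q$. The resulting condition $q'(j)=q(j)$ is elegant and makes the run-by-run count a clean interval computation. What you gain is modularity --- you reuse the algebraic lemmas already proved and never appeal to a picture --- and a single unified condition that covers all interior $j$ at once. What the paper's approach gains is that it avoids the detour through \cref{samenumberprojdimone} and the need to write down (and justify) the explicit dictionary $c_j=1+A_q-j$, $\delta_j=1+j-D_{q-1}$; the geometric description of syzygies in the Auslander--Reiten quiver does that work implicitly. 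Both are valid and arrive at the same per-run contributions $d_1-1$, $\max\{d_i-k_i-1,0\}$, $a_\ell-1$.
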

	\begin{proof}
		\begin{enumerate}
			\item This is clear.
			\item We first consider the part $u^{a_1}d^{d_1}$. It is $a_1\geq d_1$. Since $\ell \geq 2$, the indecomposable injective modules $Q$, which are not projective, correspond to the points in the right side of the peak,  whose levels are $1+a_1-m$, where $m=1,2,\dots, d_1-1$, as we can see in the following piture:
			\begin{figure}[h]
				\centering
				\includegraphics[width=0.6\linewidth]{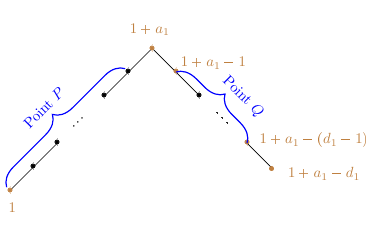}
			\end{figure}
			\\
			Hence, the points $P$ are the ones of level $1+a_1-(1+a_1-m)=m$,  $m=1,2,\dots, d_1-1$. If $d_1-1<1$, i.e. $d_1=1$, then $\#P=0$. We consider now the case, where $d_1>1$. It is $d_1-1<d_1\leq a_1<a_1+1$, hence $\#P=d_1-1$. Summarizing the two cases,  for the part  $u^{a_1}d^{d_1}$ we have $\#P=d_1-1$.
			
			We now consider the part $u^{a_{\ell}}d^{d_{\ell}}$. It is $a_{\ell}\leq d_{\ell}$. The indecomposable injective modules $Q$, which are not projective, correspond to the points in the right side of the peak,  whose levels are $k_{\ell}+a_{\ell}-m$, where $m=1,2,\dots, d_{\ell}$. Hence, the points $P$ are the ones of level $k_{\ell}+a_{\ell}-(k_{\ell}+a_{\ell}-m)=m$,  $m=1,2,\dots, d_{\ell}$.
			\vspace{1cm}
			\begin{figure}[h]
				\centering
				\includegraphics[width=0.9\linewidth]{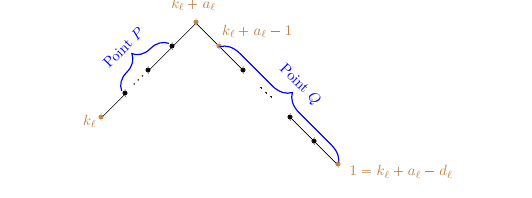}
			\end{figure}
			
			If $d_{\ell}<k_{\ell}$ then $k_{\ell}-d_{\ell}+a_{\ell}>a_{\ell}$. From Lemma \ref{l1}$(2)$ we have then that $1>a_{\ell}$, which is a contradiction. Therefore, $d_{\ell}\geq k_{\ell}$. If $d_{\ell}=k_{\ell}$ then $\#P=0$. We now consider the case $d_{\ell}> k_{\ell}$. From Lemma \ref{l1}$(2)$ we have $k_{\ell}-d_{\ell}+a_{\ell}=1>0$, hence $d_{\ell}<k_{\ell}+a_{\ell}$. Hence, 
			$\#P=d_{\ell}-k_{\ell}$. Summarizing the two cases, for the part  $u^{a_{\ell}}d^{d_{\ell}}$ we have $\#P=d_{\ell}-k_{\ell}$.	Using Lemma \ref{l1}$(2)$ again, we have $\#P=d_{\ell}-k_{\ell}=a_{\ell}-(k_{\ell}+a_{\ell}-d_{\ell})=a_{\ell}-1$.
			
			We now consider the part $u^{a_{i}}d^{d_{i}}$, where $1<i<\ell$. The indecomposable injective modules $Q$, which are not projective, correspond to the points in the right side of the peak,  whose level is $k_{i}+a_{i}-m$, where $m=1,2,\dots, d_{i}-1$. 
			\begin{figure}[h]
				\centering
				\includegraphics[width=0.6\linewidth]{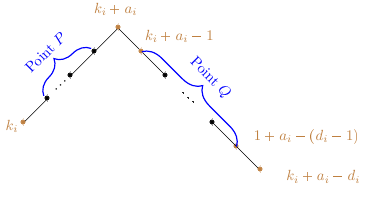}
			\end{figure}
			Hence, the points $P$ are the ones of level $k_{i}+a_{i}-(k_{i}+a_{i}-m)=m$,  $m=1,2,\dots, d_i-1$. if $d_i-1\leq k_i$ then
			$\#P=0$. We now consider the case $d_i-1> k_i$. It is $-1<k_{i+1}=k_i+a_i-d_i$. Therefore, $d_i-1<k_i+a_i$. Hence, $\#P=d_i-1-k_i$.  Summarizing the two cases, for the part  $u^{a_i}d^{d_{i}}$,  $1<i<\ell$  we have $\#P=\max \{d_i-k_i-1,0\}$.
			\qedhere	\end{enumerate}
	\end{proof}
	
	Let $\pi \in S_n$ be a 321-avoiding permutation, $\{i_1,i_2,\dots, i_r\}$ the set of all excedance locations of $\pi$, as described in Section \ref{inv}. We have the following corollary. 
	\begin{corollary} \label{corollaryP}
		Let $A$ be an $(n+1)$-linear Nakayama algebra with corresponding Dyck $n$-path $\mathcal{D}$ and $\pi$ the 321-avoiding permutation under the Billey-Jockusch-Stanley bijection applied to $\mathcal{D}$.
		The number of points $P$, which correspond to the the indecomposable injective modules with projective dimension one is:
		\begin{enumerate}
			\item $n$, if $\pi$ is the identity.
			\item $i_1-1+\sum_{j=2}^{r}\max\{i_j-\pi(i_{j-1})-1,0\}+n-\pi(i_r)$, if $\pi$ is not the identity.
		\end{enumerate}
	\end{corollary}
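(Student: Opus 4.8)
The plan is to deduce this directly from Proposition \ref{P} by substituting the dictionary between Dyck paths and $321$-avoiding permutations provided by Lemma \ref{zz} and Lemma \ref{l1}. First I would dispose of the identity case: under the Billey-Jockusch-Stanley bijection the permutation with no excedances is exactly the identity, and this corresponds to the Dyck path $\mathcal{D}=u^nd^n$, so part (1) is immediate from Proposition \ref{P}(1).

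For the non-identity case, let $i_1<\dots<i_r$ with $r\geq 1$ be the excedance locations of $\pi$. By the description of the inverse Billey-Jockusch-Stanley bijection in Section \ref{inv}, the corresponding Dyck path is $\mathcal{D}=\prod_{i=1}^{r+1}u^{a_i}d^{d_i}$, so $\ell=r+1\geq 2$ and Proposition \ref{P}(2) applies. I then translate the three pieces of that formula. From Lemma \ref{zz}(2) we get $d_1=i_1$, hence $d_1-1=i_1-1$; and from the definition of the inverse bijection $a_\ell=a_{r+1}=n+1-\pi(i_r)$, hence $a_\ell-1=n-\pi(i_r)$. For the middle terms, fix $i$ with $2\leq i\leq \ell-1=r$; telescoping Lemma \ref{zz}(2) gives $d_i=i_i-i_{i-1}$, while Lemma \ref{l1}(1) together with Lemma \ref{zz} gives
$$k_i=1+(a_1+\dots+a_{i-1})-(d_1+\dots+d_{i-1})=1+(\pi(i_{i-1})-1)-i_{i-1}=\pi(i_{i-1})-i_{i-1},$$
so that $d_i-k_i-1=i_i-\pi(i_{i-1})-1$. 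Substituting these three identities into the formula of Proposition \ref{P}(2) yields exactly the expression in (2), and since $\max\{\cdot,0\}$ is applied termwise the correspondence is literal.

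I expect no serious obstacle here; the argument is essentially bookkeeping. The only points requiring care are the index shift $\ell=r+1$ (so that the range $2\le i\le\ell-1$ of the sum in Proposition \ref{P} becomes $2\le j\le r$ in the corollary), the degenerate sub-cases $r=0$ (the identity, handled separately) and $r=1$ (where the middle sum is empty on both sides), and checking that the edge data $d_1$ and $a_{r+1}$ are read off correctly from the recipe in Section \ref{inv}.
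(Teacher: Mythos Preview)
Your proof is correct and follows essentially the same approach as the paper: both deduce the result from Proposition~\ref{P} by translating $d_1$, $a_\ell$, and $d_i-k_i-1$ into permutation data via Lemma~\ref{zz} and Lemma~\ref{l1}, with the key identity being $k_j=\pi(i_{j-1})-i_{j-1}$ (equivalently $i_{j-1}+k_j=\pi(i_{j-1})$). Your write-up is in fact slightly more explicit about the endpoint terms and the index shift $\ell=r+1$; the only cosmetic issue is the overloaded use of $i$ as both the summation index and the excedance-location subscript (writing $i_i$), which you may want to rename to $j$ for readability.
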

	\begin{proof}
		If $\pi$ is the identity, then following section \ref{inv} the corresponding $n$-Dyck path is $\mathcal{D}=u^nd^n$. Hence, $(1)$ follows directly from Proposition \ref{P} (1).
		
		Let $\pi$ now not be the identity. Following section \ref{inv} the corresponding $n$-Dyck path is $\mathcal{D}=\prod_{j=1}^{r+1}u^{a_j}d^{d_j}$, where $a_1=\pi(i_1)-1$, $a_j=\pi(i_j)-\pi(i_{j-1})$, for $j=2,\dots,r$, $a_{r+1}=n+1-\pi(i_r)$, $d_1=i_1$, $d_j=i_j-i_{j-1}$, for $j=2,\dots,r$, $d_{r+1}=n-i_r$. Following Proposition \ref{P}(2) we have that $\#P=i_1-1+\sum_{j=2}^{r}\max\{i_j-i_{j-1}-k_j-1,0\}+n-\pi(i_r)$. It remains to prove that for every $j=2,\dots, r$, $i_{j-1}+k_j=\pi(i_{j-1})$. This result follows directly from Lemma \ref{zz} and Lemma \ref{l1}.
	\end{proof}
	\begin{theorem} \label{theoremP}
		Let $\pi\in S_n$ be a 321-avoiding permutation  with $\{i_1,\dots, i_r\}$ the set of all excedance locations of $\pi$, with $i_1<i_2<\dots<i_r$. 
		The number of fixed points of $\pi$ is the following:
		\begin{enumerate}
			\item $n$, if $\pi$ is the identity.
			\item $i_1-1+\sum_{j=2}^{r}\max\{i_j-\pi(i_{j-1})-1,0\}+n-\pi(i_r)$, if $\pi$ is not the identity.
		\end{enumerate}
	\end{theorem}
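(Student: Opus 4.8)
The plan is to collapse the fixed‑point count to one clean set‑theoretic identity and then read off the formula. Throughout write $w_j:=\pi(i_j)$, and recall from Subsection~\ref{inv} that the excedance locations and values satisfy $i_1<\dots<i_r$, $w_1<\dots<w_r$, and $i_j<w_j$ for every $j$.

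The heart of the argument is the claim that a point is fixed exactly when it avoids all of the ``excedance intervals'':
\[
\pi(i)=i \qquad\Longleftrightarrow\qquad i\notin\bigcup_{j=1}^{r}\{\,i_j,\,i_j+1,\,\dots,\,w_j\,\}.
\]
For the backward direction I would argue with prefixes, using no hypothesis on $\pi$: one has $\pi(\{1,\dots,m\})=\{1,\dots,m\}$ precisely when no excedance index $k$ satisfies $i_k\le m<w_k$ (a position $j\le m$ carries a value $>m$ exactly when it is an excedance location $i_k$ with $w_k>m$); if $i$ lies in none of the intervals then this condition holds for both $m=i$ and $m=i-1$, and $\pi(\{1,\dots,i\})=\{1,\dots,i\}$ together with $\pi(\{1,\dots,i-1\})=\{1,\dots,i-1\}$ forces $\pi(i)=i$. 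The forward direction is the one place $321$-avoidance enters: if $\pi(i)=i$ but $i_k\le i\le w_k$ for some $k$, one first checks $i_k<i<w_k$, so $\pi(i_k)=w_k>i=\pi(i)$ is a descent; then a pigeonhole count — among the $i$ positions $\le i$, the two positions $i_k$ and $i$ already carry values $\ge i$, so at most $i-2$ of the $i-1$ values $<i$ can sit there — produces a position $p>i$ with $\pi(p)<i$, and $i_k<i<p$ with $\pi(i_k)>\pi(i)>\pi(p)$ is a forbidden $321$-pattern.

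Granting the claim, $\mathrm{Fix}(\pi)=[n]\setminus\bigcup_j\{i_j,\dots,w_j\}$, and since both endpoint sequences $(i_j)$ and $(w_j)$ are strictly increasing I would describe this complement directly. For a point $x$, let $k^\ast$ be the largest index with $i_{k^\ast}\le x$ (with no such index when $x<i_1$); then ``$x<i_k$ or $x>w_k$ for all $k$'' reduces to $w_{k^\ast}<x<i_{k^\ast+1}$ (setting $i_{r+1}:=\infty$), or to $x<i_1$ in the exceptional case. Hence the complement is the disjoint union of $\{1,\dots,i_1-1\}$, the gaps $\{w_{j-1}+1,\dots,i_j-1\}$ for $j=2,\dots,r$, and $\{w_r+1,\dots,n\}$, of sizes $i_1-1$, $\max\{i_j-w_{j-1}-1,0\}$ and $n-w_r=n-\pi(i_r)$ respectively; summing yields exactly the expression in~(2). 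Part~(1) is immediate: for $\pi$ the identity there are no excedance intervals, so all $n$ points are fixed.

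I expect the only genuine obstacle to be the forward direction of the boxed claim, namely extracting an explicit $321$-pattern from the assumption that a fixed point lies inside some excedance interval; everything else (the prefix characterization and the interval bookkeeping) is routine. A secondary point worth stating carefully is that the monotonicity $w_1<\dots<w_r$ is precisely what makes the complement of the union expressible through the single preceding excedance value $w_{j-1}$ rather than through a maximum, which is why the formula takes the stated shape.
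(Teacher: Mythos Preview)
Your proof is correct, and it takes a genuinely different route from the paper's.

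The paper argues segment by segment: it fills in the non-excedance entries of $\pi$ in increasing order, first showing that positions $1,\dots,i_1-1$ are fixed, then that positions $\pi(i_r)+1,\dots,n$ are fixed, and finally, for each block between $i_{j-1}$ and $i_j$, it splits into the two cases $\pi(i_{j-1})<i_j$ and $\pi(i_{j-1})\ge i_j$ and determines the entries explicitly in each case. Your approach instead proves a single uniform characterisation --- the fixed points are exactly $[n]\setminus\bigcup_j[i_j,\pi(i_j)]$ --- and then reads off the same block sizes from the complement of a union of intervals with increasing left and right endpoints. Your backward implication via the prefix criterion $\pi(\{1,\dots,m\})=\{1,\dots,m\}$ works for arbitrary permutations, and you isolate the $321$-avoidance hypothesis to the single pigeonhole step extracting a pattern $i_k<i<p$; this is cleaner than the paper's case split, where the avoidance hypothesis is used implicitly through the ``fill in increasing order'' rule. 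The paper's argument, on the other hand, actually computes every entry of $\pi$ in each block, which is more information than the fixed-point count alone. Your final remark is apt: the monotonicity $\pi(i_1)<\dots<\pi(i_r)$ is exactly what collapses the complement to gaps governed by the single preceding value $\pi(i_{j-1})$, and this is the reason the formula has the stated shape.
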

	\begin{proof}
		(1) is obvious, therefore we prove (2). Let 
		$$\pi = \left(\begin{tabular}{cccc|ccc|ccc}
			1 & 2 & \ldots & $i_1-1$& $i_1$ &\ldots & $i_r$ & $i_{r}+1$&\ldots& $n$\\
			$*$ & $*$&$*$  & $*$& $\pi(i_1)$ & $*$& $\pi(i_r)$ & $*$&$*$& $*$
		\end{tabular}\right)$$
		Since $\pi$ is a $321$-avoiding permutation, we must fill the missing entries in increasing order. We have $i_1<\pi(i_1)$ and, hence, we first fill the entries with the numbers $1, 2,\dots, i_1-1$. Therefore, the permutation takes the following form:
		$$\pi = \left(\begin{tabular}{cccc|ccc|ccc}
			1 & 2 & \ldots & $i_1-1$& $i_1$ &\ldots & $i_r$ & $i_{r}+1$&\ldots& $n$\\
			$1$ & $2$& \ldots & $i_1-1$& $\pi(i_1)$&$*$ & $\pi(i_r)$ & $*$&$*$& $*$
		\end{tabular}\right)$$
		Therefore, we have at this point $i_1-1$ fixed points.
		We now consider the indices $i_r+1, i_r+2, \dots, n$. Firstly, we notice that in this  last part of the permutation we have to fill $n-i_r$ entries. Since $i_r<\pi(i_r)$,  among the numbers, with which we fill these entries in increasing order, are the numbers (in decreasing order) $n, n-1, \dots,\pi(i_r)+1$. The only possible choice is the following: 
		$$\pi = \left(\begin{tabular}{ccccccccc}
			\ldots& $i_r$ &$i_{r}+1$&$\pi(i_r)-1$ &$\pi(i_r)$&$\pi(i_r)+1$&\ldots& $n-1$& $n$\\
			&$\pi(i_r)$&$*$&$*$&$*$&$\pi(i_r)+1$&\ldots& $n-1$& $n$
		\end{tabular}\right)$$
		Therefore, we have from this last part of the permutation $n-\pi(i_r)$ fixed points. 
		
		We now consider the following part of the permutation:
		$$\pi = \left(\begin{tabular}{ccccc}
			\ldots&$i_{j-1}$ & \ldots &$i_{j}$&\ldots\\
			& $\pi(i_{j-1})$ &$*$&$\pi(i_{j})$&
		\end{tabular}\right),$$
		where $j=2,\dots, r$. 
		We distinguish the following cases:
		\begin{itemize}
			\item $\pi(i_{j-1})<i_j.$ In this case, the permutation is of the following form:
			{\scriptsize{
					$$\pi = \left(\begin{tabular}{ccccccc|ccc|cc}
						1& \ldots & $i_1$ &\ldots& $i_{j-1}$& \ldots&$\pi(i_{j-1})$&$\pi(i_{j-1})+1$&\ldots
						&$i_{j}-1$&$i_j$&\ldots\\
						$*$& $*$ & $\pi(i_1)$ &$*$& $\pi(i_{j-1})$& $*$&$*$&$*$&$*$
						&$*$&$\pi(i_j)$&$*$
					\end{tabular}\right)$$
			}}
			
			Let $x\in\{1,2,\dots, i_{j}-1\}$.  We recall that we fill in the missing entries in increasing order and that  $\pi(i_1)<\pi(i_2)<\dots<\pi(i_{j-1})<i_j$. Therefore, $\pi(x)\in \{1,2,\dots, i_{j}-1\}$.
			In particular, we fill the entries 	$\pi(x)$, $x\in\{1,2,\dots, \pi(i_{j-1})\}\setminus\{i_1,\dots, i_{j-1}\}$ in increasing order with the numbers belonging to the set  $$\{1,2,\dots, \pi(i_{j-1})\}\setminus\{\pi(i_1),\dots, \pi(i_{j-1})\}.$$ Therefore, we have $\pi(x)\in \{1,2,\dots, \pi(i_{j-1})\}$, for $x\in \{1,2,\dots, \pi(i_{j-1})\}$. 
			
			Let now $x\in\{\pi(i_{j-1})+1,\pi(i_{j-1})+2,\dots,i_j-1\}$. We fill the entries $\pi(x)$ in increasing order with the numbers belonging to the set $\{\pi(i_{j-1})+1,\pi(i_{j-1})+2,\dots,i_j-1\}$. Therefore, we have $\pi(x)=x$, for $x\in\{\pi(i_{j-1})+1,\pi(i_{j-1})+2,\dots,i_j-1\}$ and hence, in this part of the permutation we have $i_j-1-\pi(i_{j-1})$ fixed points.
			\item $\pi(i_{j-1})\geq i_j.$ 
			Let $S_j:=\{\ell\in\{1,\dots, i_{j-1}\}\,:\,\pi(\ell)\geq i_j\}$. It is $S_j\not=\emptyset$, since $i_{j-1}\in S_j$. Let $x\in\{1,\dots, i_j-1\}\setminus \{\ell\;:\; \ell \in S_j\}$. We have $\pi(x)\in\{1,\dots, i_j-1-|S_j|\}$. 
			In particular,  for the elements $x\in\{i_{j-1}+1,\dots, i_j-1\}$ we have $\pi(x)\leq x-|S_j|$ and, hence, there are no fix points in this part of the permutation.
			
		\end{itemize}
		
		Combining the two cases, the number of fixed points in this part of the permutation is $\max\{i_j-\pi(i_{j-1})-1,0\}$.
	\end{proof}
	
	We can now give a proof of our first main result:
	\begin{theorem}
		Let $A_{\pi}$ be a Nakayama algebra corresponding to the 321-avoiding permutation $\pi$.
		Then the number of indecomposable projective $A$-modules with injective dimension one is equal to the number of fixed points of $\pi$.
		
	\end{theorem}
	\begin{proof}
		By Lemma \ref{samenumberprojdimone} the number of indecomposable projective $A$-modules with injective dimension one equals the number of indecomposable injective $A$-modules with projective dimension one. Those modules were counted for a general Nakayama algebra with corresponding Dyck path $\mathcal{D}$ in Corollary \ref{corollaryP} and in Theorem \ref{theoremP} we saw that this number coincides with the fixed points of the 321-avoiding permutation $\pi$ which is the image of $D$ under the Billey-Jockusch-Stanley bijection.		
	\end{proof}

	\subsection{The number of self-extensions of the Jacobson radical of a Nakayama algebra}
	Let $A$ be an $(n+1)$-linear Nakayama algebra. 
	In Theorem \ref{extjjtheorem} we saw that $\dim(\Ext_A^1(J,J))=n+1- | \{ e_i J | \id(e_i J) \leq 1 \}|$ and in Lemma \ref{lemmaext1jj} that $| \{ e_i J | \id(e_i J) \leq 1 \}|$ equals the number of indecomposable injective modules $I$ whose first syzygy is the radical of a projective module. We will now count those modules for a general $(n+1)$-linear Nakayama algebra $A$ corresponding to a Dyck $n$-path $\mathcal{D}$.
	\begin{proposition}\label{PP}
		Let $A$ be an $(n+1)$-linear Nakayama algebra with corresponding Dyck $n$-path $\mathcal{D}=\prod_{i=1}^{\ell}u^{a_i}d^{d_i}$, $\ell\geq 2$.
		The number of points $P$, which correspond to the indecomposable injective modules whose first Syzygy is a radical of a projective module is:
		$$d_1+\sum_{i=2}^{\ell-1}\max\{d_i-k_i,0\}+a_{\ell}.$$
	\end{proposition}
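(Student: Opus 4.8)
The plan is to prove this exactly as Proposition \ref{P} was proved: decompose $\mathcal D=\prod_{i=1}^{\ell}u^{a_i}d^{d_i}$ into its three kinds of ascent-descent block and count, within each block, the relevant indecomposable injective modules directly from the Dyck-path model of the Auslander-Reiten quiver, with the sole change that the condition ``projective dimension one'' is replaced by ``first syzygy is a radical of an indecomposable projective module''. For an indecomposable injective $I$ lying on the descent of a block, the model locates the projective cover of $I$, hence the first syzygy $\Omega^1(I)$ and its position; by Corollary \ref{corollaryradicals} together with the dominant-dimension reasoning behind Lemma \ref{lemmaext1jj}, the module $\Omega^1(I)$ is isomorphic to the radical $e_sJ$ of an indecomposable projective precisely when the relevant radical has injective dimension at most one. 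This is the ``$\le 1$'' analogue of the ``$=1$'' condition in Proposition \ref{P}, so the set of admissible positions within each block will be an interval one longer than the one there, except for a short interior block ($d_i\le k_i$), where no position is admissible in either case.

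With this translation I would run through the three cases just as in the proof of Proposition \ref{P}. For the initial block $u^{a_1}d^{d_1}$ (where $a_1\ge d_1$): Proposition \ref{P} produces $d_1-1$ admissible points, so here there is one more, and the inequality chain $d_1-1<d_1\le a_1<a_1+1$ used there confirms the levels are distinct and valid, giving $\#P=d_1$. For the terminal block $u^{a_\ell}d^{d_\ell}$ (where $a_\ell\le d_\ell$): the argument of Proposition \ref{P} together with Lemma \ref{l1}(2) gives $\#P=d_\ell-k_\ell+1=a_\ell-(k_\ell+a_\ell-d_\ell)+1=a_\ell$. For an interior block $u^{a_i}d^{d_i}$ (where $1<i<\ell$): the same case split as in Proposition \ref{P}, with the threshold ``$d_i-1\le k_i$'' replaced by ``$d_i\le k_i$'' (below which $\#P=0$), gives $\#P=\max\{d_i-k_i,0\}$. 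Summing over $i=1,\dots,\ell$ yields $d_1+\sum_{i=2}^{\ell-1}\max\{d_i-k_i,0\}+a_\ell$, which is the claim; the excluded case $\ell=1$, namely $\mathcal D=u^nd^n$, is the hereditary algebra and is handled separately.

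The step I expect to be the main obstacle is the precise bookkeeping at the peaks and at the interfaces between consecutive blocks: one must verify that, on passing from ``$\id=1$'' to ``$\id\le 1$'', exactly one extra module per block (none, for a short interior block) becomes admissible, that this extra module is indeed an indecomposable injective lying in the claimed block, and that the inequalities from Lemma \ref{l1} governing the truncation to $0$ survive the relaxation. One also has to reconcile the count at the degenerate spots --- the zero radical $e_nJ=0$ at the sink, and the projective-injective modules, whose first syzygy is zero --- so that the total matches $|\{e_iJ\mid \id(e_iJ)\le 1\}|$ from Lemma \ref{lemmaext1jj}.
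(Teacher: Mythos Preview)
Your block-by-block strategy matches the paper's proof exactly: the paper also treats the initial block, the terminal block, and the interior blocks separately, locates the syzygies of the injective modules on each descent, and counts how many of them land at a ``radical-of-a-projective'' position on the corresponding ascent. The paper carries this out by writing down, for each block, the interval of levels occupied by the syzygies and the interval of levels occupied by radicals of projectives, and intersecting the two; Lemma~\ref{l1} controls the endpoints. The resulting counts are $d_1$, $a_\ell$, and $\max\{d_i-k_i,0\}$, as you predicted.

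Where your write-up diverges is in the justification you give for the ``one more per block'' phenomenon. You phrase it as passing from an ``$\id=1$'' condition to an ``$\id\le 1$'' condition, but that is not what is happening: Proposition~\ref{P} counts injectives whose first syzygy is \emph{projective}, while Proposition~\ref{PP} counts injectives whose first syzygy is a \emph{radical of a projective}. Through Lemmas~\ref{samenumberprojdimone} and~\ref{lemmaext1jj} these translate, respectively, into $|\{e_iA:\id(e_iA)=1\}|$ and $|\{e_iJ:\id(e_iJ)\le 1\}|$ --- different families of modules, not the same family under a relaxed inequality. The genuine reason the counts shift by one is geometric: on the ascent of a block, the radical positions sit exactly one level below the projective positions, so the interval to be intersected with the syzygy levels is shifted by one. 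The paper makes this explicit; your heuristic obscures it. Once you replace the ``$\le 1$ versus $=1$'' story by this level-shift observation, your outline becomes the paper's proof. The degenerate contribution you worry about (the zero radical $e_nJ=0$, accounted for by the projective-injective at the first peak) is exactly the ``including also the $0$'' step in the paper's treatment of the initial block.
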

	\begin{proof}
		
		We first consider the part $u^{a_1}d^{d_1}$. It is $a_1\geq d_1$. The injective modules $Q$
		correspond to the points of the right side of the peak, whose level is $1+a_1-m$, where $m=1,2,\dots, d_1-1$.
		\begin{figure}[h]
			\centering
			\includegraphics[width=0.6\linewidth]{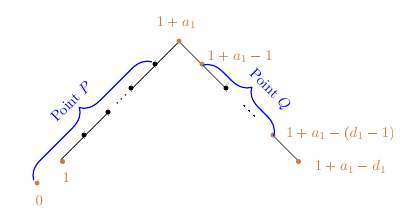}
		\end{figure}
		\\	Their Syzygies are $1+a_1-(1+a_1-m)=m$, $m=1,2,\dots, d_1-1$. 
		We notice that in this case, all these Syzygies correspond to points $P$ (since $d_1\leq a_1)$ and, hence, including also the 0, we have $\#P=d_1$.
		
		We now consider the part $u^{a_{\ell}}d^{d_{\ell}}$. It is $a_{\ell}\leq d_{\ell}$. The points $Q$, which are the injective modules, correspond to the points of the right side of the peak,  whose level is $k_{\ell}+a_{\ell}-m$, where $m=1,2,\dots, d_{\ell}$. 
		\begin{figure}[h]
			\centering
			\includegraphics[width=0.74\linewidth]{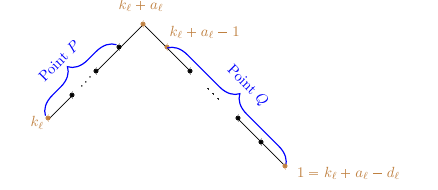}
		\end{figure}
		
		The Syzygy of the injective modules correspond to the points  $k_{\ell}+a_{\ell}-(k_{\ell}+a_{\ell}-m)=m$,  $m=1,2,\dots, d_{\ell}$.
		Hence, the level of the points we are interested is the intersection of the intervals $\left[1,2,\dots, d_{\ell}\right]$ and $\left[k_{\ell},k_{\ell}+1, \dots, k_{\ell}+a_{\ell}-1 \right]$. From Lemma \ref{l1}$(2)$ we have $k_{\ell}+a_{\ell}-1=d_{\ell}$, therefore the intersection of the above intervals is the interval $\left[k_{\ell},k_{\ell}+1, \dots, k_{\ell}+a_{\ell}-1 \right]$. Therefore, 
		$\#P=a_{\ell}$.
		
		We now consider the part $u^{a_{i}}d^{d_{i}}$, where $1<i<\ell$. The injective modules correspond to points $Q$, whose level is $k_{i}+a_{i}-m$, where $m=1,2,\dots, d_{i}-1$. 
		\begin{figure}[h]
			\centering
			\includegraphics[width=0.74\linewidth]{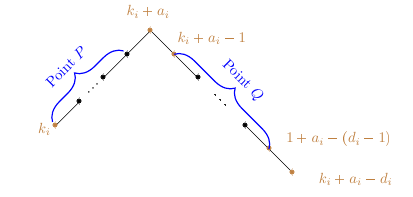}
		\end{figure}
		\\
		The Syzygies of the injective modules are those of level $k_{i}+a_{i}-(k_{i}+a_{i}-m)=m$,  $m=1,2,\dots, d_i-1$.
		Hence, the levels of the points $P$ is the intersection of the intervals $\left[1,2,\dots, d_i-1\right]$ and $\left[k_{i},k_{i}+1, \dots, k_{i}+a_{i} \right]$. We first notice that $k_i+a_i> d_i-1$, since by definition $k_i+a_i-d_i=k_{i+1}$. If $d_i\leq k_i$ then
		$\#P=0$. If $d_i> k_i$ then the intersection of the above intervals is the interval  $\left[k_{i},k_{i}+1, \dots, d_i-1 \right]$. Hence, $\#P=d_i-k_i$.
		Summarizing the two cases, for the part  $u^{a_i}d^{d_{i}}$,  $1<i<\ell$  we have $\#P=\max \{d_i-k_i,0\}$.
	\end{proof}
	
	Let $\pi \in S_n$ be a 321-avoiding permutation, $\{i_1,i_2,\dots, i_r\}$ the set of all excedance locations of $\pi$, as described in Section \ref{inv}. We assume that $\pi$ is not the identity.  We have the following corollary. 
	\begin{corollary} \label{corollaryPP}
		Let $A$ be an $(n+1)$-linear Nakayama algebra with corresponding $n$-Dyck path $\mathcal{D}$ and let $\pi$ be the image of $\mathcal{D}$ under the Billey-Jockusch-Stanley bijection.
		The number of points $P$, which correspond to the indecomposable injective modules whose first Syzygy is a radical of a projective module is:
		$$i_1+\sum_{j=2}^{r}\max\{i_j-\pi(i_{j-1}),0\}+n+1-\pi(i_r).$$
	\end{corollary}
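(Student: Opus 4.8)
The plan is to obtain Corollary \ref{corollaryPP} as a direct translation of Proposition \ref{PP} through the inverse Billey-Jockusch-Stanley bijection, in complete parallel with the way Corollary \ref{corollaryP} was deduced from Proposition \ref{P}. Since $\pi$ is assumed not to be the identity it has at least one excedance, so $r\geq 1$ and the corresponding Dyck $n$-path produced in Section \ref{inv} is $\mathcal{D}=\prod_{j=1}^{r+1}u^{a_j}d^{d_j}$ with $\ell=r+1\geq 2$, where $a_1=\pi(i_1)-1$, $a_j=\pi(i_j)-\pi(i_{j-1})$ for $2\leq j\leq r$, $a_{r+1}=n+1-\pi(i_r)$, $d_1=i_1$, $d_j=i_j-i_{j-1}$ for $2\leq j\leq r$, and $d_{r+1}=n-i_r$. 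In particular $\mathcal{D}$ falls under the hypothesis $\ell\geq 2$ of Proposition \ref{PP}, so its count equals $d_1+\sum_{i=2}^{\ell-1}\max\{d_i-k_i,0\}+a_\ell$.

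First I would substitute these values into that formula: the term $d_1$ becomes $i_1$, the term $a_\ell=a_{r+1}$ becomes $n+1-\pi(i_r)$, and the middle sum becomes $\sum_{j=2}^{r}\max\{(i_j-i_{j-1})-k_j,0\}$. The one substantive point is to rewrite $k_j$ in terms of $\pi$, namely to check the identity $i_{j-1}+k_j=\pi(i_{j-1})$ for $2\leq j\leq r$. This is exactly the identity already used inside the proof of Corollary \ref{corollaryP}: by Lemma \ref{l1}(1) we have $k_j=1+(a_1+\dots+a_{j-1})-(d_1+\dots+d_{j-1})$, while Lemma \ref{zz}, applied at index $j-1$ (legitimate because $j$ ranges over $2,\dots,r$, so $j-1$ ranges over $1,\dots,r-1$), gives $a_1+\dots+a_{j-1}=\pi(i_{j-1})-1$ and $d_1+\dots+d_{j-1}=i_{j-1}$; combining these yields $k_j=\pi(i_{j-1})-i_{j-1}$ as wanted.

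Granting this identity, $(i_j-i_{j-1})-k_j=i_j-\pi(i_{j-1})$, so the middle sum becomes $\sum_{j=2}^{r}\max\{i_j-\pi(i_{j-1}),0\}$, and adding the three contributions produces the claimed expression $i_1+\sum_{j=2}^{r}\max\{i_j-\pi(i_{j-1}),0\}+n+1-\pi(i_r)$. There is essentially no obstacle: the corollary is a bookkeeping translation of Proposition \ref{PP}, and the only nontrivial ingredient, the formula for $k_j$, is already available from Lemmas \ref{l1} and \ref{zz}. The only mild care needed is the index shift just noted and the boundary behaviour of the $\max$-terms, which already matches the case analysis carried out within the proof of Proposition \ref{PP}; in the degenerate case $r=1$ the empty middle sum and the formula $d_1+a_2=i_1+(n+1-\pi(i_1))$ agree with the stated expression, so no separate argument is required.
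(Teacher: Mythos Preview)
Your proof is correct and follows exactly the same approach as the paper: substitute the values of $a_j$ and $d_j$ coming from Section~\ref{inv} into Proposition~\ref{PP}, and then reduce the middle sum via the identity $i_{j-1}+k_j=\pi(i_{j-1})$, which the paper likewise derives from Lemmas~\ref{l1} and~\ref{zz}. Your write-up is in fact more explicit than the paper's (you spell out the computation of $k_j$ and check the degenerate case $r=1$), but the argument is the same.
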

	\begin{proof}
		Following Section \ref{inv} the corresponding $n$-Dyck path is $\mathcal{D}=\prod_{j=1}^{r+1}u^{a_j}d^{d_j}$, where $a_1=\pi(i_1)-1$, $a_j=\pi(i_j)-\pi(i_{j-1})$, for $j=2,\dots,r$, $a_{r+1}=n+1-\pi(i_r)$, $d_1=i_1$, $d_j=i_j-i_{j-1}$, for $j=2,\dots,r$, $d_{r+1}=n-i_r$. Following Proposition \ref{PP} we have that $\#P=i_1+\sum_{j=2}^{r}\max\{i_j-i_{j-1}-k_j,0\}+n+1-\pi(i_r)$. It remains to prove that $i_{j-1}+k_j=\pi(i_{j-1})$, for $j=2,\dots, r$. This result follows directly from Lemma \ref{zz} and Lemma \ref{l1}.
	\end{proof}
	
	\begin{definition}
		The \emph{connectivity set} of a permutation $\sigma\in S_n$ is the set of indices $1\leq i \leq n$ such that $\sigma(k)<i$ for all $k<i$. The \emph{support size} $\mathfrak{s}(\sigma)$ of $\sigma$ is defined as the cardinality $k$ such that $\pi$ is the minimal product of transpositions of the form $s_i$ and $k$ is the number of distinct $s_i$ that appear.
	\end{definition}
	
	We refer to \url{http://www.findstat.org/StatisticsDatabase/St000019} for more data on the support size. In particular we have that the connectivity set is the complement of the support. And thus we can calculate the support size of a permutation using the connectivity set as we will do in the following.
	\begin{theorem} \label{theoremPP}
		Let $\pi\in S_n$ be a 321-avoiding permutation (not the identity)  with $\{i_1,\dots, i_r\}$ the set of all excedance locations of $\pi$, corresponding to an $n$-Dyck path using the Billey-Jockusch-Stanley bijection. 
		The connectivity set of $\pi$ has the following cardinality:
		$$i_1+\sum_{j=2}^{r}\max\{i_j-\pi(i_{j-1}),0\}+n-\pi(i_r).$$
	\end{theorem}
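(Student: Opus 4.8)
The plan is to follow the strategy of the proof of Theorem~\ref{theoremP}: use the explicit description of how the missing entries of $\pi$ are filled in increasing order, partition $\{1,\dots,n\}$ into the initial block $\{1,\dots,i_1\}$, the blocks $\{i_{j-1}+1,\dots,i_j\}$ for $j=2,\dots,r$, and the final block $\{i_r+1,\dots,n\}$, and count on each block how many indices belong to the connectivity set. Recall that $i\in\{1,\dots,n\}$ lies in the connectivity set of $\pi$ exactly when $\{\pi(1),\dots,\pi(i-1)\}=\{1,\dots,i-1\}$, equivalently when $\max\{\pi(1),\dots,\pi(i-1)\}\le i-1$. The basic obstruction to $i$ being in the connectivity set is the existence of an excedance location $i_s\le i-1$ with $\pi(i_s)\ge i$; this will kill most candidates.

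For the initial block I would use that positions $1,\dots,i_1-1$ are filled by $1,\dots,i_1-1$, so $\{\pi(1),\dots,\pi(i-1)\}=\{1,\dots,i-1\}$ for every $i\le i_1$, and all $i_1$ of these indices lie in the connectivity set. For the final block I would reuse the description from the proof of Theorem~\ref{theoremP}: the values $n,n-1,\dots,\pi(i_r)+1$ must occupy precisely the positions $n,n-1,\dots,\pi(i_r)+1$, so every position $>\pi(i_r)$ is a fixed point and hence $\{\pi(1),\dots,\pi(\pi(i_r))\}=\{1,\dots,\pi(i_r)\}$; combined with the obstruction remark applied to the excedance location $i_r$ (which carries the value $\pi(i_r)\ge i$ for all $i\le\pi(i_r)$), this gives that the connectivity-set elements in $\{i_r+1,\dots,n\}$ are exactly $\pi(i_r)+1,\dots,n$, contributing $n-\pi(i_r)$.

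For a block $\{i_{j-1}+1,\dots,i_j\}$ I would split into the same two cases as in the proof of Theorem~\ref{theoremP}. If $\pi(i_{j-1})\ge i_j$, then for every $i$ with $i_{j-1}<i\le i_j$ the excedance location $i_{j-1}\le i-1$ carries the value $\pi(i_{j-1})\ge i_j\ge i>i-1$, so no such $i$ lies in the connectivity set and the contribution is $0$. If $\pi(i_{j-1})<i_j$, the analysis in the proof of Theorem~\ref{theoremP} shows that the positions $\pi(i_{j-1})+1,\dots,i_j-1$ are fixed points and that $\{\pi(1),\dots,\pi(\pi(i_{j-1}))\}=\{1,\dots,\pi(i_{j-1})\}$; hence the connectivity-set elements in $\{i_{j-1}+1,\dots,i_j\}$ are exactly $\pi(i_{j-1})+1,\dots,i_j$, contributing $i_j-\pi(i_{j-1})$. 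In both cases the contribution equals $\max\{i_j-\pi(i_{j-1}),0\}$, and summing the initial, middle, and final contributions yields the stated formula.

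The main obstacle, exactly as in Theorem~\ref{theoremP}, is the bookkeeping in the case $\pi(i_{j-1})<i_j$: one has to track where the non-excedance values land under the increasing-fill rule in order to confirm $\{\pi(1),\dots,\pi(\pi(i_{j-1}))\}=\{1,\dots,\pi(i_{j-1})\}$, and one has to remember that the right endpoints $i_1$ and $i_j$ (which are excedance locations, hence \emph{not} fixed points) are still connectivity-set elements. This last point is precisely the source of the discrepancy between this formula and the fixed-point formula of Theorem~\ref{theoremP} (the terms $i_1$ versus $i_1-1$, and $\max\{i_j-\pi(i_{j-1}),0\}$ versus $\max\{i_j-\pi(i_{j-1})-1,0\}$). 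Since the placement facts are already established there, essentially no new computation is required.
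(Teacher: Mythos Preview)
Your proposal is correct and follows essentially the same approach as the paper: both partition $\{1,\dots,n\}$ into the initial block, the blocks $\{i_{j-1}+1,\dots,i_j\}$, and the final block, reuse the placement analysis from the proof of Theorem~\ref{theoremP}, and count connectivity-set elements blockwise via the same two-case split on $\pi(i_{j-1})$ versus $i_j$. Your treatment of the case $\pi(i_{j-1})\ge i_j$ is in fact slightly cleaner than the paper's---you invoke the obstruction $\pi(i_{j-1})\ge i_j\ge i$ directly, whereas the paper detours through the set $S_j$ and a bound on $\pi(x)$---but the underlying idea is the same.
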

	\begin{proof}
		Let
		$$\pi = \left(\begin{tabular}{cccccccccc}
			1 & 2 & \ldots & $i_1-1$& $i_1$ &\ldots & $i_r$ & $i_{r}+1$&\ldots& $n$\\
			$*$ & $*$&$*$  & $*$& $\pi(i_1)$ & $*$& $\pi(i_r)$ & $*$&$*$& $*$
		\end{tabular}\right)$$
		According to the Billey-Jockusch-Stanley bijection, we must fill the missing entries in increasing order. Since $i_1<\pi(i_1)$ we first fill the entries with the numbers $1, 2,\dots, i_1-1$. Therefore, the permutation takes the following form:
		$$\pi = \left(\begin{tabular}{cccccc}
			1 & 2 & \ldots & $i_1-1$& $i_1$ &\ldots \\
			$1$ & $2$& \ldots & $i_1-1$& $\pi(i_1)$&$*$ 
		\end{tabular}\right)$$
		Since $i_1<\pi(i_1)$ the points $1,2, \dots, i_1$ belong to the connectivity set. 
		Therefore, we have at this point $i_1$ points inside the connectivity set.
		
		We now consider the indices $i_r+1, i_r+2, \dots, n$. Firstly, we notice that in this  last part of the permutation we have to fill $n-i_r$ entries. Since $i_r<\pi(i_r)$,  among the numbers, with which we fill these entries in increasing order, are the numbers (in decreasing order) $n, n-1, \dots,\pi(i_r)+1$. The only possible choice is the following: 
		$$\pi = \left(\begin{tabular}{cc|ccccccc}
			\ldots& $i_r$ &$i_{r}+1$&$\pi(i_r)-1$ &$\pi(i_r)$&$\pi(i_r)+1$&\ldots& $n-1$& $n$\\
			&$\pi(i_r)$&$*$&$*$&$*$&$\pi(i_r)+1$&\ldots& $n-1$& $n$
		\end{tabular}\right)$$
		Therefore, for each $i\in\{i_r+1, \dots, \pi(i_r)\}$ we have $i\leq \pi(i_r)$ and, hence, these points don't belong to the connectivity set of the permutation. On the other hand, the indices $\pi(i_r)+1, \dots, n-1, n$ belong to this set. Hence, we have $n-\pi(i_r)$ elements inside the connectivity set. 
		
		We now consider the indices between $i_{j-1}+1$ and $i_j$, for $j=2,\dots, r$. We distinguish the following cases:
		\begin{itemize}
			\item $\pi(i_{j-1})<i_j.$ In this case, the permutation is of the following form:
			{\scriptsize{
					$$\pi = \left(\begin{tabular}{ccccccc|ccc|cc}
						1& \ldots & $i_1$ &\ldots& $i_{j-1}$& \ldots&$\pi(i_{j-1})$&$\pi(i_{j-1})+1$&\ldots
						&$i_{j}-1$&$i_j$&\ldots\\
						$*$& $*$ & $\pi(i_1)$ &$*$& $\pi(i_{j-1})$& $*$&$*$&$*$&$*$
						&$*$&$\pi(i_j)$&$*$
					\end{tabular}\right)$$
			}}
			
			Let $x\in\{1,2,\dots, i_{j}-1\}$.  By the definition of the Billey-Jockusch-Stanley bijection and the fact that  $\pi(i_1)<\pi(i_2)<\dots<\pi(i_{j-1})<i_j$, we have $\pi(x)\in \{1,2,\dots, i_{j}-\nolinebreak1\}$.
			In particular, we fill the entries 	$\pi(x)$, $x\in\{1,2,\dots, \pi(i_{j-1})\}\setminus\{i_1,\dots, i_{j-1}\}$ in increasing order with the numbers belonging to the set $$\{1,2,\dots, \pi(i_{j-1})\}\setminus\{\pi(i_1),\dots, \pi(i_{j-1})\}.$$ Therefore, we have $\pi(x)\in \{1,2,\dots, \pi(i_{j-1})\}$, for $x\in \{1,2,\dots, \pi(i_{j-1})\}$. 
			
			Let now $x\in\{\pi(i_{j-1})+1,\pi(i_{j-1})+2,\dots,i_j-1\}$. We fill the entries $\pi(x)$ in increasing order with the numbers belonging to the set $\{\pi(i_{j-1})+1,\pi(i_{j-1})+2,\dots,i_j-1\}$. Therefore, we have $\pi(x)=x$, for $x\in\{\pi(i_{j-1})+1,\pi(i_{j-1})+2,\dots,i_j-1\}$.
			Therefore, for each $x\in\{i_{j-1}+1, \dots, \pi(i_{j-1})\}$ we have $x\leq \pi(i_{j-1})$ and, hence, these points don't belong to the connectivity set of the permutation. On the other hand, the indices $\pi(i_{j-1})+1, \dots, i_j$ belong to this set. Hence, we have $i_j-\pi(i_{j-1})$ elements inside the connectivity set. 
			
			\item $\pi(i_{j-1})\geq i_j.$ Firstly, we notice that since $\pi(i_{j-1})\geq i_j$, the index $i_j$ doesn't belong to the connectivity set. We now consider the indices $x\in \{i_{j-1}+1,\dots, i_j-1\}$.
			Let $S_j:=\{\ell\in\{1,\dots, i_{j-1}\}\,:\,\pi(\ell)\geq i_j\}\subset \{i_1,\dots,i_{j-1}\}$. We have $S_j\not=\emptyset$, since $i_{j-1}\in S_j$. Let $x\in\{1,\dots, i_j-1\}\setminus \{\ell\;:\; \ell \in S_j\}$. We have $\pi(x)\in\{1,\dots, i_j-1-|S_j|\}$. 
			In particular, by the definition of Billey-Jockusch-Stanley bijection, for the elements $x\in\{i_{j-1}+1,\dots, i_j-1\}$ we have $\pi(x)\leq x-|S_j|$ and, hence, these indices don't  belong to connectivity set. 
		\end{itemize}
		\noindent
		Therefore, by combining the two cases, we have that the number of points in the connectivity set is $\max\{i_j-\pi(i_{j-1}),0\}$.
	\end{proof}
	We now obtain our second main result:
	\begin{theorem} 
		Let $A_{\pi}$ be an $(n+1)$-linear Nakayama algebra with Jacobson radical $J$ associated to the 321-avoiding permutation $\pi$ on $[n]$.
		Then $\Ext_{A_{\pi}}^1(J,J) \cong K^{\mathfrak{s}(\pi)}$.
	\end{theorem}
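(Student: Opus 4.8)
The plan is to establish the equivalent numerical statement $\dim_K\Ext^1_{A_\pi}(J,J)=\mathfrak{s}(\pi)$; since $\Ext^1_{A_\pi}(J,J)$ is a finite-dimensional $K$-vector space and $\dim_K K^{\mathfrak{s}(\pi)}=\mathfrak{s}(\pi)$, this is the same as the claimed isomorphism. First I would dispose of the case where $\pi$ is the identity of $S_n$: its Dyck $n$-path is $u^nd^n$, so $A_\pi$ is the unique hereditary $(n+1)$-linear Nakayama algebra (Kupisch series $[n+1,n,\dots,2,1]$), and then $\Ext^1_{A_\pi}(J,J)=0$ by \ref{injdimcharalemma}(2); since also $\mathfrak{s}(\mathrm{id})=0$, the assertion holds in this case.

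So assume from now on that $\pi$ is not the identity, and let $i_1<i_2<\dots<i_r$ be its excedance locations as in Section \ref{inv}. Applying \ref{extjjtheorem} with $n+1$ simple modules gives
$$\dim_K\Ext^1_{A_\pi}(J,J)=(n+1)-| \{ e_i J | \id(e_i J) \leq 1 \}|.$$
By \ref{lemmaext1jj} the cardinality $| \{ e_i J | \id(e_i J) \leq 1 \}|$ is the number of indecomposable injective $A_\pi$-modules whose first syzygy is the radical of an indecomposable projective module, and by \ref{corollaryPP} this number equals $i_1+\sum_{j=2}^{r}\max\{i_j-\pi(i_{j-1}),0\}+(n+1)-\pi(i_r)$. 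On the other hand, \ref{theoremPP} computes the cardinality of the connectivity set of $\pi$ as $c:=i_1+\sum_{j=2}^{r}\max\{i_j-\pi(i_{j-1}),0\}+n-\pi(i_r)$. Comparing these two expressions, which differ by exactly one, I get $| \{ e_i J | \id(e_i J) \leq 1 \}|=c+1$, hence $\dim_K\Ext^1_{A_\pi}(J,J)=n-c$.

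It remains to check that $n-c=\mathfrak{s}(\pi)$, using the relationship between the connectivity set and the support recalled in the remark preceding \ref{theoremPP}. The index $1$ always lies in the connectivity set, and for $2\le i\le n$ the index $i$ lies in it precisely when $\{\pi(1),\dots,\pi(i-1)\}=\{1,\dots,i-1\}$, which by the standard parabolic description of the support is equivalent to $s_{i-1}$ not appearing in a reduced word for $\pi$. Hence $i\mapsto s_{i-1}$ is a bijection from the connectivity set minus $\{1\}$ onto $\{s_1,\dots,s_{n-1}\}$ minus the support of $\pi$, so $c=1+\bigl((n-1)-\mathfrak{s}(\pi)\bigr)$, that is, $n-c=\mathfrak{s}(\pi)$. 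Together with the previous paragraph this yields $\dim_K\Ext^1_{A_\pi}(J,J)=\mathfrak{s}(\pi)$, as required.

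I do not expect a serious obstacle here, since all of the substantive combinatorial work has already been carried out in \ref{PP}--\ref{theoremPP}; the only points that need care are the off-by-one reconciling the connectivity set (a subset of $\{1,\dots,n\}$ that always contains $1$) with the support (a subset of $\{s_1,\dots,s_{n-1}\}$), and the separate treatment of the identity permutation, for which \ref{corollaryPP} and \ref{theoremPP} are not stated.
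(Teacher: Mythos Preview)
Your proof is correct and follows essentially the same approach as the paper's own argument: invoke \ref{extjjtheorem} and \ref{lemmaext1jj}, then match the count from \ref{corollaryPP} with the connectivity-set computation in \ref{theoremPP}, and finally pass from the connectivity set to the support size. In fact you are more careful than the paper in two respects --- you treat the identity permutation separately (since \ref{corollaryPP} and \ref{theoremPP} exclude it), and you make the off-by-one between the two formulas and between the connectivity set and the support explicit, whereas the paper's phrasing ``it coincides with the cardinality of the connectivity set'' glosses over this.
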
 
	\begin{proof}
		In Theorem \ref{extjjtheorem} we saw that $\dim(\Ext_A^1(J,J))=n+1- | \{ e_i J | \id(e_i J) \leq 1 \}|$ and in Lemma \ref{lemmaext1jj} that $| \{ e_i J | \id(e_i J) \leq 1 \}|$ equals the number of indecomposable injective modules $I$ whose first Syzygy is the radical of a projective module. We counted those modules $I$ for a general  Nakayama algebra $A$ corresponding to a Dyck $n$-path $\mathcal{D}$ in Corollary \ref{corollaryPP} and in Theorem \ref{theoremPP} we saw that it coincides with the cardinality of the connectivity set of the corresponding 321-avoiding permutation $\pi$.
		Thus $\dim(\Ext_A^1(J,J))=n+1-| \{ e_i J | \id(e_i J) \leq 1 \}|$ equals the support size of $\pi$.		
	\end{proof}
	
	As a corollary of the previous theorem we obtain:
	\begin{corollary}
		Let $k$ be a natural number with $0 \leq k \leq n$.
		The number of linear Nakayama algebras $A$ and Jacobson radical $J$ with $n+2$ simple modules such that $\dim(\Ext_A^1(J,J))=k$ is equal to the number of standard tableaux of shape $[n,k]$. In particular, the number of such Nakayama algebras such that $\dim(\Ext_A^1(J,J))$ is equal to the maximal possible number $n$ is given by the Catalan numbers $C_n$.
		
	\end{corollary}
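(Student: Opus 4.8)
The plan is to push the problem to $321$-avoiding permutations via the previous theorem and then run a purely combinatorial count. First I would observe that a linear Nakayama algebra with $n+2$ simple modules corresponds, under the Dyck-path bijection followed by the Billey--Jockusch--Stanley bijection, to a $321$-avoiding permutation $\pi\in S_{n+1}$, and that both maps are bijections; by the previous theorem $\dim(\Ext_{A_\pi}^1(J,J))=\mathfrak{s}(\pi)$. So it suffices to prove that the number of $321$-avoiding $\pi\in S_{n+1}$ with $\mathfrak{s}(\pi)=k$ equals the number $f^{[n,k]}$ of standard Young tableaux of shape $[n,k]$, which is a genuine Young diagram precisely because $0\le k\le n$.

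Second I would rewrite the support size as a block count. Every $\pi\in S_{n+1}$ is uniquely a direct sum $\pi=\sigma_1\oplus\cdots\oplus\sigma_t$ of sum-indecomposable permutations, and the breakpoints of this decomposition are exactly the complement of the support of $\pi$ (the connectivity set, as recalled before Theorem~\ref{theoremPP}). A $321$-pattern cannot straddle two summands of a direct sum, so $\pi$ is $321$-avoiding if and only if every $\sigma_i$ is; and since a sum-indecomposable permutation of $S_m$ uses each of $s_1,\dots,s_{m-1}$ in its reduced words while reduced words of a direct sum concatenate (after the evident shifts), one obtains $\mathfrak{s}(\pi)=\sum_i(|\sigma_i|-1)=(n+1)-t$. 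In particular $\mathfrak{s}(\pi)\le n$ with equality exactly when $\pi$ is sum-indecomposable, and $\mathfrak{s}(\pi)=k$ holds precisely when $\pi$ decomposes into $t=n+1-k$ sum-indecomposable $321$-avoiding blocks.

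Third comes the enumeration. Let $\mathcal{C}(x)=\sum_{m\ge0}C_m x^m=\frac{1-\sqrt{1-4x}}{2x}$ be the Catalan generating function, which counts $321$-avoiding permutations by size, and let $D(x)$ be the generating function of the sum-indecomposable ones. The unique block decomposition gives $\mathcal{C}(x)=1/(1-D(x))$, whence $D(x)=1-1/\mathcal{C}(x)=x\,\mathcal{C}(x)$ using the Catalan equation $\mathcal{C}=1+x\mathcal{C}^2$; in particular there are $C_{m-1}$ sum-indecomposable $321$-avoiding permutations of $[m]$. Therefore the number of $321$-avoiding $\pi\in S_{n+1}$ with exactly $t$ blocks is $[x^{n+1}]D(x)^t=[x^{n+1-t}]\mathcal{C}(x)^t$, which for $t=n+1-k$ is $[x^{k}]\mathcal{C}(x)^{n+1-k}$. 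The classical ballot identity $[x^{k}]\mathcal{C}(x)^{j}=\frac{j}{j+2k}\binom{j+2k}{k}$ applied with $j=n+1-k$ gives $\frac{n+1-k}{n+1+k}\binom{n+1+k}{k}$, and a one-line manipulation of factorials rewrites this as $\frac{n-k+1}{n+1}\binom{n+k}{k}$, which is exactly the hook-length-formula value $f^{[n,k]}$. Specialising to $k=n$ yields $[x^{n}]\mathcal{C}(x)=C_n=f^{[n,n]}$, which is the final assertion.

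I expect the only delicate point to be the bookkeeping in the second paragraph: one must reconcile the paper's conventions for support and connectivity set with the block count $t$ and with the two row lengths $n$ and $k$ of the shape $[n,k]$, and one must verify the elementary pattern-avoidance facts that the direct-sum blocks of a $321$-avoiding permutation are again $321$-avoiding and conversely. Everything after that is the standard generating-function computation together with the ballot-number evaluation of $[x^k]\mathcal{C}(x)^j$, both entirely routine.
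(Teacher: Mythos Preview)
Your proposal is correct. Both your argument and the paper's begin by invoking the previous theorem to replace $\dim(\Ext_{A_\pi}^1(J,J))$ by $\mathfrak{s}(\pi)$ and then rewrite the support size as $(n+1)$ minus the block number of $\pi\in S_{n+1}$ (what the paper records as $n-|\{1\le k\le n:\{\pi_1,\dots,\pi_k\}=\{1,\dots,k\}\}|$ for $\pi\in S_n$). From that point the paper simply cites Proposition~2.5 of Adin--Bagno--Roichman \cite{ABR} for the enumeration of $321$-avoiding permutations by block number in terms of standard tableaux of shape $[n,k]$. You instead supply a self-contained proof: the identity $D(x)=x\,\mathcal{C}(x)$ for the generating function of sum-indecomposable $321$-avoiders, the coefficient extraction $[x^{n+1}]D(x)^{t}=[x^{k}]\mathcal{C}(x)^{n+1-k}$, the ballot-number evaluation $[x^{k}]\mathcal{C}(x)^{j}=\tfrac{j}{j+2k}\binom{j+2k}{k}$, and the match with the hook-length value $f^{[n,k]}=\tfrac{n-k+1}{n+1}\binom{n+k}{k}$. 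This buys independence from the external reference at the cost of a short but genuine computation; the paper's route is terser but relies on \cite{ABR}. The only bookkeeping to watch, as you note, is the offset between $S_{n+1}$ (coming from $n+2$ simple modules) and the row lengths $n,k$; your indices are consistent throughout.
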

	\begin{proof}
		Let $\pi$ be a general permutation in $S_n$. Then we have that the support size $\mathfrak{s}(\pi)$ is given by $n-|\{1 \leq k \leq n \mid \{\pi_1,\dots,\pi_k\}=\{1,\dots,k\} \}|$, where $|\{1 \leq k \leq n \mid \{\pi_1,\dots,\pi_k\}=\{1,\dots,k\} \}|$ is called the \emph{block number} of $\pi$, see \url{http://www.findstat.org/StatisticsDatabase/St000056}.
		The result now follows from Proposition 2.5 of \cite{ABR}.
	\end{proof}
	\section{Outlook on generalizations}
	
	Motivated by the main result of our article, we pose the following problem:
	\begin{problem}
		Let $k \geq 0$ and $n \geq 2$.
		Give a combinatorial interpretation of the statistic that associates the number of indecomposable projective $A$-modules of injective dimension $\leq k$ to a linear Nakayama algebra with $n$ simple modules.
	\end{problem}
	In this article we gave a solution to this problem for $k=1$. For $k=0$ the problem is about the number of indecomposable projective-injective modules and those clearly correspond to the number of peaks of the Dyck path.
	We give the following combinatorial conjecture for the case $k=2$:
	\begin{conjecture}
		The statistic that associates to a Dyck path from $(0,0)$ to $(0,2n-2)$ (in canonical bijection to a linear Nakayama algebra with $n$ simple modules) the number of indecomposable projective modules with injective dimension $\leq 2$ has the same distribution as the statistic that associates the pyramid weight plus one to an $(n-1)$-Dyck path.
		
	\end{conjecture}
	Here the pyramid weight of a Dyck path $D$ is defined as the sum of the lengths of the maximal pyramids (maximal sequences of the form $u^h d^h$
	) in the path, see \url{https://www.findstat.org/StatisticsDatabase/St000144/} for this statistic in findstat and \cite{B} and \cite{DS} for references where this statistic was studied.
	We came up this this conjecture thanks to findstat and we verified this conjecture for $n \leq 10$ with the computer. We remark that findstat suggests that the pyramid statistic is related to several other statistics on pattern avoiding permutations.

	\subsection*{Acknowledgements}
	This project started as a part of the working group organized by Steffen K\"onig at the University of Stuttgart. We profited from the use of findstat \cite{Find}, the GAP-package QPA \cite{QPA} and Sage \cite{Sage}. Rene Marczinzik has been supported by the DFG with the project number 428999796. We are thankful to Brian Hopkins for referring us to the reference \cite{ABR} and to Emily Norton for proofreading our paper.

\end{document}